\providecommand{\U}[1]{\protect\rule{.1in}{.1in}}
\newtheorem{theorem}{Theorem}
\newtheorem{corollary}[theorem]{Corollary}
\newtheorem{proposition}[theorem]{Proposition}
\newtheorem{remark}[theorem]{Remark}
\newenvironment{proof}[1][Proof]{\textbf{#1.} }{\ \rule{0.5em}{0.5em}}
\begin{document}

\title{On the critical length conjecture for spherical Bessel functions in CAGD}
\author{O. Kounchev and H. Render}
\maketitle

\begin{abstract}
A conjecture of J.M. Carnicer, E. Mainar and J.M. Pe\~{n}a states that the
critical length of the space $P_{n}\odot C_{1}$ generated by the functions
$x^{k}\sin x$ and $x^{k}\cos x$ for $k=0,...n$ is equal to the first positive
zero $j_{n+\frac{1}{2},1}$ of the Bessel function $J_{n+\frac{1}{2}}$ of the
first kind. It is known that the conjecture implies the following statement
(D3): the determinant of the Hankel matrix%
\begin{equation}
\left(
\begin{array}
[c]{ccc}%
f & f^{\prime} & f^{\prime\prime}\\
f^{\prime} & f^{\prime\prime} & f^{\left(  3\right)  }\\
f^{\prime\prime} & f^{\prime\prime\prime} & f^{\left(  4\right)  }%
\end{array}
\right) \label{eqabstract}%
\end{equation}
does not have a zero in the interval $(0,j_{n+\frac{1}{2},1})$ whenever
$f=f_{n}$ is given by $f_{n}\left(  x\right)  =\sqrt{\frac{\pi}{2}}%
x^{n+\frac{1}{2}}J_{n+\frac{1}{2}}\left(  x\right)  .$ In this paper we shall
prove (D3) and various generalizations.

\end{abstract}

\textbf{Keywords: } Chebyshev spaces; critical length; spherical Bessel
function; Hankel matrix; Wronski determinant

\textbf{Mathematics Subject Classification (2000)}: 41A05, 41A10, 42A10,
65D05, 65D17.

{\large CORRESPONDING AUTHOR:} Ognyan Kounchev,

email:\ kounchev@abv.bg;

\textbf{Mailing address}: Institute of Mathematics and Informatics, Bulgarian
Academy of Sciences, Acad. G. Bonchev Str., bl. 8, 1113 Sofia, Bulgaria

{\large AUTHOR:} Hermann Render, email:\ hermann.render@ucd.ie;

\textbf{Mailing address}: School of Mathematics and Statistics, University
College Dublin, Belfield 4, Dublin, Ireland.

\bigskip

\bigskip

\section{Introduction}

The critical length $l\left(  U\right)  $ of a finite dimensional,
translation-invariant space $U$ of differentiable functions defined on the set
$\mathbb{R}$ of all real numbers is the supremum of the lengths of the
intervals where Hermite interpolation problems are unisolvent for any choice
of nodes (see \cite{CMP04}). Equivalently, the critical length is the supremum
of the lengths of the intervals where $U$ is an extended Chebyshev space.
Recall that a linear space $U$ of functions defined on an interval $I$ is an
extended Chebyshev space if the number of zeros (including multiplicities) in
$I$ of any nonzero function is less than the dimension of the space. The
notion of critical length is an important tool in CAGD and it is related to
the question which finite dimensional spaces containing oscillating functions
are suitable for design purposes. Oscillating functions are of interest since
they represent curves and solutions of differential equations related with the
physical nature of some problems, see e.g. \cite{AKR07}, \cite{AKR08},
\cite{AKR08b}, \cite{CMP94} , \cite{CMP04}, \cite{CMP23}, \cite{Pena97},
\cite{Zhan96}. A particular example is the cycloidal space $C_{n}$, the linear
space generated by the functions
\[
1,x,....,x^{n-2},\cos x,\sin x
\]
which has been investigated in \cite{CMP14}, \cite{CMP17}, \cite{AMR19}.
Another example, which will be the main subject in this paper, is the linear
space generated by
\[
\cos x,\sin x,\quad x\cos x,\quad x\sin x,\quad...\quad,x^{n}\cos x,\quad
x^{n}\sin x
\]
which is denoted by $P_{n}\odot C_{1}$ in \cite{CMP23}. Note that $C_{n}$ and
$P_{n}\odot C_{1}$ are subspaces of the space $C\left(  \mathbb{R}\right)  $
of all continuous real-valued functions defined on $\mathbb{R}$ which are
invariant under translations and reflections. This means that for any $u\in U
$ the translation $u_{h}$ defined by $u_{h}\left(  x\right)  =u\left(
x-h\right)  $ and the reflection $u_{r}$ defined by $u_{r}\left(  x\right)
=u\left(  -x\right)  $ are in $U$ for any real number $h.$

It is shown in the recent paper \cite{CMP17} (see also \cite{AMR19}) that the
critical length $l\left(  C_{n}\right)  $ of the cycloidal space $C_{n}$ is
related to the first positive zero $j_{\nu,1}$ of the Bessel function $J_{\nu
}$ of the first kind for $\nu=\nu_{n}:=\lfloor n/2\rfloor-\frac{1}{2},$ more
precisely the equation $l\left(  C_{n}\right)  =2j_{\nu_{n},1}$ holds, where
\[
J_{\nu}\left(  x\right)  :=%
{\displaystyle\sum_{k=0}^{\infty}}
\frac{\left(  -1\right)  ^{k}}{\Gamma\left(  k+1\right)  \Gamma\left(
k+\nu+1\right)  }\left(  \frac{x}{2}\right)  ^{2k+\nu}\text{ }%
\]
for $\nu\in\mathbb{R}.$ The critical length of $P_{n}\odot C_{1}$ is discussed
in \cite{CMP23} and it is shown that it is bounded by the first positive zero
$j_{n+\frac{1}{2},1}$ of the Bessel function $J_{n+\frac{1}{2}}.$ J.M.
Carnicer, E. Mainar and J.M. Pe\~{n}a conjectured in \cite{CMP23} that this
inequality is actually an equality and we shall call this the CMP-conjecture.

Let us outline what it known about the CMP-conjecture: recall that a basis
$b_{0},...,b_{n}$ of a linear space $U\subset C\left[  a,b\right]  $ of
dimension $n+1$ is called canonical if $b_{k}$ has zero of exact order $k$ at
$x=a$ for $k=0,...,n$, so $b_{k}^{\left(  j\right)  }\left(  0\right)  =0$ for
$j=0,...,k-1,$ and $b_{k}^{\left(  k\right)  }\left(  0\right)  \neq0$ where
$f^{\left(  k\right)  }$ is the $k$-th derivative of a function $f.$
Proposition 3 in \cite{CMP23} states that the critical length of a linear
space invariant under translation and reflection and with a canonical basis
$b_{0},...,b_{n}$ is the least positive zero of the functions
\[
w_{j,n}\left(  x\right)  =\det W\left(  u_{j},...,u_{n}\right)  \left(
x\right)  \qquad j>n/2,
\]
where $W\left(  u_{j},...,u_{n}\right)  \left(  x\right)  $ is the Wronskian
matrix at $x$ of the system of functions $\left(  u_{j},...,u_{n}\right)  $
defined by
\[
W\left(  u_{j},...,u_{n}\right)  =\left(
\begin{array}
[c]{cccc}%
u_{j} & u_{j+1} & \cdots & u_{n}\\
u_{j}^{\prime} & u_{j+1}^{\prime} &  & u_{n}^{\prime}\\
\vdots &  &  & \vdots\\
u_{j}^{\left(  n-j\right)  } & u_{j+1}^{\left(  n-j\right)  } & \cdots &
u_{n}^{\left(  n-j\right)  }%
\end{array}
\right)  .
\]

Noting that the space $P_{n}\odot C_{1}$ has dimension $2n+2$ it is shown in
\cite[Proposition 3]{CMP23} that a canonical basis of $P_{n}\odot C_{1}$ is
given by the system of derivatives $f_{n}^{\left(  2n+1-k\right)  }$ for
$k=0,...,2n+1$ of the function
\begin{equation}
f_{n}\left(  x\right)  =\sqrt{\frac{\pi}{2}}x^{n+\frac{1}{2}}J_{n+\frac{1}{2}%
}\left(  x\right)  .\label{eqdeffn}%
\end{equation}
The function $f_{n}$ has a zero of order $2n+1$ at $x=0$ and it satisfies the
differential equation
\begin{equation}
f_{n}^{\prime\prime}\left(  x\right)  -\frac{2n}{x}f_{n}^{\prime}\left(
x\right)  +f_{n}\left(  x\right)  =0.\label{eqdefnewf}%
\end{equation}
When applying the above-mentioned Proposition 3 in \cite{CMP23} one has to
show that $w_{j,2n+1}\left(  x\right)  \neq0$ for $j>\left(  2n+1\right)  /2.$
Note that for $j=2n+1$ this means that
\[
w_{2n+1,2n+1}\left(  x\right)  =f_{n}\left(  x\right)
\]
has no zeros in the interval $\left(  0,b\right)  ,$ and the critical length
is smaller than $j_{n+\frac{1}{2},1}$. One key technical result shown in
\cite[Proposition 4]{CMP23} is the fact that for all natural numbers $n$ the
following determinants
\[
w_{2n,2n+1}\left(  x\right)  =\det\left(
\begin{array}
[c]{cc}%
f_{n}^{\prime}\left(  x\right)  & f_{n}\left(  x\right) \\
f_{n}^{\prime\prime}\left(  x\right)  & f_{n}^{\prime}\left(  x\right)
\end{array}
\right)  =f_{n}^{\prime}\left(  x\right)  f_{n}^{\prime}\left(  x\right)
-f_{n}\left(  x\right)  f_{n}^{\prime\prime}\left(  x\right)
\]
are positive on the entire interval $\left(  0,\infty\right)  $. Consequently
the CMP-conjecture is true for the case $n=1$, since the condition
$w_{j,3}\left(  x\right)  >0$ for all $j>3/2$ with $j\leq3$ (i.e. $j=2,3)$ is satisfied.

The main result of the present paper states that for all natural numbers $n$
the following determinant
\[
w_{2n-1,2n+1}=\det\left(
\begin{array}
[c]{ccc}%
f_{n}^{\prime\prime} & f_{n}^{\prime} & f_{n}\\
f_{n}^{\prime\prime\prime} & f_{n}^{\prime\prime} & f_{n}^{\prime}\\
f_{n}^{\left(  4\right)  } & f_{n}^{\prime\prime\prime} & f_{n}^{\prime\prime
}\,
\end{array}
\right)
\]
is positive on the interval $\left(  0,j_{n+\frac{1}{2},1}\right)  $.
Consequently the CMP -conjecture is true also for the case $n=2.$

Our methods of proof are applicable in a more general setting: instead of
working with the Bessel-type function $f_{n}$ we shall assume that a function
$f:\left(  a,b\right)  \rightarrow\mathbb{R}$ is given which satisfies a
differential equation
\begin{equation}
f^{\prime\prime}+pf^{\prime}+qf=0\label{eqODE}%
\end{equation}
where $p,q\in C^{4}\left(  a,b\right)  .$ We are investigating in Section 2
conditions on $p,q$ such that the function
\begin{equation}
v\left(  f\right)  :=\det\left(
\begin{array}
[c]{cc}%
f^{\prime} & f^{\prime\prime}\\
f & f^{\prime}%
\end{array}
\right)  =\left(  f^{\prime}\right)  ^{2}-f^{\prime\prime}f\label{eqdefv}%
\end{equation}
is positive and increasing on $\left(  a,b\right)  $, and we shall generalize
some results in \cite{CMP23} to this more general context.

In Section 3 we begin with the investigation of our main subject in this
paper, namely
\begin{equation}
w\left(  f\right)  :=\det\left(
\begin{array}
[c]{ccc}%
f^{\prime\prime} & f^{\prime} & f\\
f^{\left(  3\right)  } & f^{\prime\prime} & f^{\prime}\\
f^{\left(  4\right)  } & f^{\left(  3\right)  } & f^{\prime\prime}%
\end{array}
\right)  =-\det\left(
\begin{array}
[c]{ccc}%
f & f^{\prime} & f^{\prime\prime}\\
f^{\prime} & f^{\prime\prime} & f^{\left(  3\right)  }\\
f^{\prime\prime} & f^{\left(  3\right)  } & f^{\left(  4\right)  }%
\end{array}
\right) \label{eqdefw}%
\end{equation}
where in the last step we have just interchanged column 1 and 3. Using the
differential equation (\ref{eqODE}) we deduce that
\[
w\left(  f\right)  =\left(  p^{\prime}f^{\prime}+q^{\prime}f\right)
^{2}f-A\cdot v\left(  f\right)
\]
where $A$ is a suitable expression and $v\left(  f\right)  $ is given by
(\ref{eqdefv}), see Theorem \ref{ThmMain3}.

In order to discuss monotonicity properties of $w$ we investigate in Section 4
the derivative $w^{\prime}$. One key result is Theorem \ref{ThmMain4}: If
$q^{\prime}=0$ then the following identity
\[
w^{\prime}+\frac{3}{2}\left(  p-\frac{p^{\prime\prime}}{p^{\prime}}\right)
w=p^{\prime}\cdot f^{\prime}\cdot V
\]
holds where
\[
V:=\left(  \frac{1}{2}\left(  p^{\prime}p-p^{\prime\prime}\right)
f+p^{\prime}f^{\prime}\right)  f^{\prime}-\left(  \frac{1}{2}p^{2}-p^{\prime
}-2q+\frac{p^{\prime\prime\prime}}{p^{\prime}}-\frac{3}{2}\frac{(p^{\prime
\prime})^{2}}{p^{\prime2}}\right)  v.
\]
In Section 5 we shall discuss the derivative of $V$ and we apply the results
to the function $f_{n}$ defined in (\ref{eqdeffn}). We shall show that
$V\left(  f_{n}\right)  $ is a positive function on $\left(  0,\infty\right)
$, and from this we deduce our main result, namely that $w\left(
f_{n}\right)  $ is positive $\left(  0,j_{1}\left(  f_{n}\right)  \right)  $
and increasing on $\left(  0,j_{1}\left(  f_{n}^{\prime}\right)  \right)  .$
Here $j_{1}\left(  f\right)  $ denotes the first positive zero of a function
$f$ defined on $\left(  0,\infty\right)  .$

\section{The function $v\left(  f\right)  $}

In this section we present results which are needed in the next sections and
generalize some results in \cite{CMP23}. Throughout the paper we use the
notation%
\[
v\left(  f\right)  =f^{\prime}f^{\prime}-f^{\prime\prime}f
\]
and we simply write $v$ instead of $v\left(  f\right)  $ if there is no
ambiguity. Further $v^{\prime}\left(  f\right)  $ will denote the derivative
of $v\left(  f\right)  .$

Note that for $\alpha\in\mathbb{R}$ we have
\[
v\left(  x^{\alpha}\right)  =\alpha^{2}x^{2\alpha-2}-\alpha\left(
\alpha-1\right)  x^{2\alpha-2}=\alpha x^{2\alpha-2}.
\]
Hence $v\left(  x^{\alpha}\right)  \geq0$ for all $\alpha\geq0$ and $x>0.$

The following result is straightforward:

\begin{proposition}
\label{Prop0}Let $f,g\in C^{2}\left(  a,b\right)  .$ Then
\[
v\left(  fg\right)  =v\left(  f\right)  g^{2}+f^{2}v\left(  g\right)  .
\]
In particular $v\left(  f\right)  \left(  x\right)  \geq0$ and $v\left(
g\right)  \left(  x\right)  \geq0$ for all $x\in\left(  a,b\right)  $ implies
that $v\left(  fg\right)  \left(  x\right)  \geq0$ for all $x\in\left(
a,b\right)  .$
\end{proposition}

\begin{proposition}
\label{Prop1}Let $p,q\in C^{1}\left(  a,b\right)  .$ If $f$ is a solution of
$f^{\prime\prime}+pf^{\prime}+qf=0$ then%
\begin{equation}
v^{\prime}\left(  f\right)  +pv\left(  f\right)  =p^{\prime}f^{\prime
}f+q^{\prime}f^{2}.\label{eqvde}%
\end{equation}

\end{proposition}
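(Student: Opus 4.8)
The statement to prove is Proposition~\ref{Prop1}: if $f$ solves $f''+pf'+qf=0$, then $v'(f)+pv(f)=p'f'f+q'f^2$, where $v(f)=(f')^2-f''f$.

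The plan is to compute $v'(f)$ directly by differentiation and then substitute the differential equation to eliminate the second derivative. First I would differentiate $v(f)=(f')^2-f''f$ using the product rule, obtaining
\[
v'(f)=2f'f''-f^{(3)}f-f''f'=f'f''-f^{(3)}f.
\]
The term $f''$ can be replaced immediately using $f''=-pf'-qf$. The genuinely new ingredient is $f^{(3)}$, which I would obtain by differentiating the relation $f''=-pf'-qf$ once more, giving $f^{(3)}=-p'f'-pf''-q'f-qf'$. This is where the $p'$ and $q'$ terms that appear on the right-hand side of the claimed identity enter the computation, so this differentiation step is the heart of the argument.

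Next I would substitute both expressions into $v'(f)=f'f''-f^{(3)}f$. Replacing $f^{(3)}$ yields $v'(f)=f'f''-\left(-p'f'-pf''-q'f-qf'\right)f$, and I would then collect the terms. The terms carrying $f''$ combine as $f'f''+pf''f=f''(f'+pf)$; applying the differential equation again to the bracket, $f'+pf$ relates to the original ODE only indirectly, so instead I would keep $f''=-pf'-qf$ and substitute to express everything in $f,f'$. After this substitution the $f''$ contributions become $(-pf'-qf)(f'+pf)$, and the remaining terms are $p'f'f+q'f^2+qf'f$. Expanding and carefully cancelling should leave exactly $v'(f)=-p\left((f')^2-f''f\right)+p'f'f+q'f^2$, which rearranges to the desired $v'(f)+pv(f)=p'f'f+q'f^2$.

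I do not expect any serious obstacle here, since the result is flagged in the excerpt as straightforward; the only point demanding care is the bookkeeping in the cancellation, specifically making sure that after substituting $f''=-pf'-qf$ the coefficient of $pv(f)$ emerges correctly and that no stray $p^2$, $pq$, or $q$ terms survive. A clean way to organize this and avoid sign errors is to recognize $v(f)=-W(f,f')$ up to orientation, where $W$ is the Wronskian of $f$ and $f'$, but the direct substitution above is self-contained and suffices.
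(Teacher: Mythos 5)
Your proposal is correct, and it is essentially the paper's own argument run in the opposite order: the paper first substitutes $f''=-pf'-qf$ into the definition of $v$ to get $v=(f')^{2}+pf'f+qf^{2}$, differentiates that, and substitutes again (so it never needs $f'''$), whereas you differentiate $v$ first, obtaining $v'=f'f''-f'''f$, and then eliminate $f'''$ by differentiating the ODE. The cancellation you defer to "careful bookkeeping" does go through exactly as you predict, so the proof is sound and the difference from the paper is only one of organization.
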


\begin{proof}
We make the substitution $f^{\prime\prime}=-pf^{\prime}-qf$ in the definition
of $v,$ so
\begin{equation}
v=\left(  f^{\prime}\right)  ^{2}-f^{\prime\prime}f=\left(  f^{\prime}\right)
^{2}+pf^{\prime}f+qf^{2}.\label{eq1}%
\end{equation}
Differentiation of the right hand side gives
\[
v^{\prime}=2f^{\prime}f^{\prime\prime}+p^{\prime}f^{\prime}f+pf^{\prime\prime
}f+pf^{\prime}f^{\prime}+q^{\prime}f^{2}+2qf^{\prime}f.
\]
Replace $f^{\prime\prime}$ by $-pf^{\prime}-qf$, so
\begin{align*}
v^{\prime}  & =\ -2pf^{\prime}f^{\prime}-2qf^{\prime}f+p^{\prime}f^{\prime
}f-p^{2}f^{\prime}f-pqf^{2}+pf^{\prime}f^{\prime}+q^{\prime}f^{2}+2qf^{\prime
}f\\
& =-pf^{\prime}f^{\prime}+\left(  p^{\prime}-p^{2}\right)  f^{\prime}f+\left(
q^{\prime}-pq\right)  f^{2}.
\end{align*}
Using (\ref{eq1}) we obtain our claim that $v^{\prime}=-pv+p^{\prime}%
f^{\prime}f+q^{\prime}f^{2}.$
\end{proof}

\begin{theorem}
Assume that $f,p,q\in C^{2}\left[  a,b\right]  $ such that $f^{\prime\prime
}+pf^{\prime}+q=0$ and $f\left(  a\right)  =f^{\prime}\left(  a\right)  =0$
and define $P=\int p\left(  x\right)  dx$. Then
\begin{equation}
v\left(  f\right)  \left(  x\right)  =\frac{1}{2}f^{2}\left(  x\right)
p^{\prime}\left(  x\right)  -\frac{1}{2}e^{-P\left(  x\right)  }\int_{a}%
^{x}f\left(  t\right)  ^{2}\left(  p^{\prime\prime}+p^{\prime}p-2q^{\prime
}\right)  e^{P}dt\label{eqnewff}%
\end{equation}

\end{theorem}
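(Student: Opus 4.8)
The plan is to read equation (\ref{eqvde}) of Proposition \ref{Prop1} as a first-order linear ODE for the unknown function $v(f)$ and to solve it by the integrating-factor method, reserving one integration by parts at the end to generate the boundary term $\tfrac12 f^2 p'$. Since $P'=p$, the natural integrating factor is $e^{P}$.

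First I would record the initial value: from $f(a)=f'(a)=0$ and the definition $v(f)=(f')^2-f''f$ we get $v(f)(a)=0$. Next, multiplying (\ref{eqvde}) by $e^{P}$ and using $(e^{P})'=p\,e^{P}$ turns the left-hand side into an exact derivative,
\[
\bigl(e^{P}v(f)\bigr)' = e^{P}\bigl(v'(f)+p\,v(f)\bigr) = e^{P}\bigl(p'f'f+q'f^{2}\bigr).
\]
Integrating from $a$ to $x$ and discarding the lower endpoint by $v(f)(a)=0$ yields
\[
e^{P(x)}v(f)(x) = \int_{a}^{x} e^{P}\bigl(p'f'f+q'f^{2}\bigr)\,dt .
\]
The decisive manipulation is then to write $p'f'f=\tfrac12 p'(f^{2})'$ and integrate this single term by parts with $u=\tfrac12 e^{P}p'$ and $dw=(f^{2})'\,dt$. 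Using $(e^{P}p')'=e^{P}(p'p+p'')$ and, crucially, $f(a)=0$ to kill the boundary contribution at $t=a$, this produces
\[
\int_{a}^{x}\tfrac12 e^{P}p'(f^{2})'\,dt = \tfrac12 e^{P(x)}p'(x)f^{2}(x)-\int_{a}^{x}\tfrac12 e^{P}(p'p+p'')f^{2}\,dt .
\]
Substituting back and merging the two surviving integrals (the $q'f^{2}$ term with the $(p'p+p'')f^{2}$ term) gives $-\tfrac12\int_{a}^{x}e^{P}(p''+p'p-2q')f^{2}\,dt$, and dividing through by $e^{P(x)}$ is precisely (\ref{eqnewff}).

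I do not anticipate a genuine obstacle here; the one point deserving care is the integration by parts, namely noticing that $f'f$ is half the derivative of $f^{2}$ so that the stated boundary term $\tfrac12 f^{2}p'$ emerges, while the hypothesis $f(a)=0$ is exactly what is needed to discard the contribution at the lower limit. The regularity assumption $f,p,q\in C^{2}[a,b]$ guarantees that every derivative and integral appearing above is well defined, so the computation is valid throughout $[a,b]$.
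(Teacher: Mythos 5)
Your proposal is correct and follows essentially the same route as the paper: both multiply (\ref{eqvde}) by the integrating factor $e^{P}$, integrate from $a$ to $x$ using $v(f)(a)=0$, and then integrate the term $\int_a^x p'f'f e^{P}\,dt$ by parts via $f'f=\tfrac12(f^2)'$ to produce the boundary term $\tfrac12 f^2 p'$ and the combined integrand $p''+p'p-2q'$. There is nothing to add; the argument is complete as you have written it.
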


\begin{proof}
We write $v$ instead of $v\left(  f\right)  .$ Multiply (\ref{eqvde}) with
$e^{P},$ so
\[
\left(  v^{\prime}+pv\right)  e^{P}=p^{\prime}f^{\prime}fe^{P}+q^{\prime}%
f^{2}e^{P}.
\]
Since $P^{\prime}=p$ we have
\[
\frac{d}{dx}\left(  e^{P}v\right)  =e^{P}v^{\prime}+e^{P}P^{\prime}%
v=e^{P}\left(  v^{\prime}+pv\right)  =p^{\prime}f^{\prime}fe^{P}+q^{\prime
}f^{2}e^{P}.
\]
Note that $v\left(  a\right)  =0$ since $f^{\prime}\left(  a\right)  =f\left(
a\right)  =0.$ Integration gives
\[
e^{P\left(  x\right)  }v\left(  x\right)  =\int_{a}^{x}\frac{d}{dt}\left(
e^{P}v\right)  dt=\int_{a}^{x}p^{\prime}f^{\prime}fe^{P}dt+\int_{a}%
^{x}q^{\prime}f^{2}e^{P}dt.
\]
We apply partial integration to the first integral of the right hand side:
\[
\int_{a}^{x}p^{\prime}f^{\prime}fe^{P}dt=\frac{1}{2}f^{2}\left(  x\right)
p^{\prime}\left(  x\right)  e^{P\left(  x\right)  }-\int_{a}^{x}\frac{1}%
{2}f^{2}\left(  p^{\prime\prime}+p^{\prime}p\right)  e^{P}dt.
\]
Thus
\[
v\left(  x\right)  =\frac{1}{2}f^{2}\left(  x\right)  p^{\prime}\left(
x\right)  -e^{-P\left(  x\right)  }\int_{a}^{x}\frac{1}{2}f^{2}\left(
p^{\prime\prime}+p^{\prime}p-2q^{\prime}\right)  e^{P}dt.
\]

\end{proof}

Formula (\ref{eqnewff}) allows us to give a short proof of an important result
in \cite{CMP23}, namely the positivity of $v\left(  f_{n}\right)  \left(
x\right)  $ for $x>0$. The proof in \cite{CMP23} depends on recurrence
relations of spherical Bessel functions. Further properties of $v\left(
f_{n}\right)  $ obtained in \cite{CMP23} will be reproven in Corollary
\ref{Cor2} below.

\begin{theorem}
For the function $f_{n}$ defined in (\ref{eqdeffn}) for $n\in\mathbb{N}_{0}$
the following identity
\[
v\left(  f_{n}\right)  \left(  x\right)  =\frac{n}{x^{2}}f_{n}^{2}\left(
x\right)  +2n\left(  n+1\right)  x^{2n}\int_{0}^{x}\frac{f_{n}^{2}\left(
t\right)  }{t^{2n+3}}dt
\]
holds and $v\left(  f_{n}\right)  \left(  x\right)  >0$ for all $x>0.$
\end{theorem}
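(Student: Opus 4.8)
The plan is to apply the identity (\ref{eqnewff}) from the preceding theorem directly, since $f_n$ solves the differential equation (\ref{eqdefnewf}), which is exactly of the form $f''+pf'+qf=0$ with $p(x)=-2n/x$ and $q(x)=1$. First I would record the elementary ingredients needed in (\ref{eqnewff}): from $p=-2n/x$ we get $p'=2n/x^2$ and $p''=-4n/x^3$, while $q'=0$; and since $P=\int p\,dx=-2n\ln x$ we have $e^{P}=x^{-2n}$ and $e^{-P}=x^{2n}$. The combination appearing in the integrand then simplifies to $p''+p'p-2q'=-4n/x^3-4n^2/x^3=-4n(n+1)/x^3$.

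Next I would check that the hypotheses of the preceding theorem are met at the left endpoint $a=0$. Since $f_n$ has a zero of order $2n+1$ at $x=0$, for $n\geq 1$ we have $f_n(0)=f_n'(0)=0$, as required; moreover the integral converges near $0$ because $f_n(t)^2/t^{2n+3}\sim c\,t^{2n-1}$ there, which is integrable for $n\geq 1$. Substituting the quantities above into (\ref{eqnewff}) with $a=0$ gives
\[
v(f_n)(x)=\tfrac12 f_n^2(x)\cdot\tfrac{2n}{x^2}-\tfrac12 x^{2n}\int_0^x f_n^2(t)\Big(-\tfrac{4n(n+1)}{t^3}\Big)t^{-2n}\,dt,
\]
and collecting constants yields precisely the claimed identity. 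Thus the computation is entirely mechanical once the substitution is in place; the only points requiring care are the vanishing of $f_n$ and $f_n'$ at the origin and the convergence of the improper integral, both of which rest on the order-$(2n+1)$ zero of $f_n$.

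For the positivity claim with $n\geq 1$ I would argue that both summands are nonnegative for $x>0$, and in fact the second is strictly positive: the coefficient $2n(n+1)x^{2n}$ is positive, and the integrand $f_n^2(t)/t^{2n+3}$ is nonnegative and vanishes only at the isolated zeros of $f_n$, so the integral over $(0,x)$ is strictly positive. Hence $v(f_n)(x)>0$. The case $n=0$ falls outside the scope of (\ref{eqnewff}) (there $f_0'(0)\neq 0$), but it is handled directly: $f_0(x)=\sin x$, for which $v(\sin x)=\cos^2 x+\sin^2 x=1>0$.

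I expect no serious obstacle here; the result is essentially a corollary of the general formula (\ref{eqnewff}). The subtle bookkeeping is confined to justifying the improper integral and confirming the boundary conditions, and to treating the degenerate case $n=0$ separately.
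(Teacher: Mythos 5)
Your proposal is correct and follows essentially the same route as the paper: substitute $p=-2n/x$, $q=1$ into formula (\ref{eqnewff}), compute $p''+p'p-2q'=-4n(n+1)/x^3$ and $e^{-P}=x^{2n}$, and collect constants. In fact you are somewhat more careful than the paper, which silently ignores that its hypotheses ($f_n(0)=f_n'(0)=0$, convergence of the improper integral, and indeed the identity itself) break down at $n=0$, a case you correctly isolate and settle directly via $v(\sin x)=1$.
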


\begin{proof}
We use formula (\ref{eqnewff}) and (\ref{eqdefnewf}): we set $p=-\frac{2n}%
{x},$ then $P=-2n\ln x$ and $e^{P}=e^{-2n\ln x}=x^{-2n}.$ Further
\[
p^{\prime\prime}+p^{\prime}p-2q^{\prime}=-\frac{4n}{x^{3}}+\frac{2n}{x^{2}%
}\left(  -\frac{2n}{x}\right)  =-\frac{4n+4n^{2}}{x^{3}}=-\frac{4n\left(
n+1\right)  }{x^{3}}.
\]

\end{proof}

\begin{theorem}
For the Bessel function $J_{\nu}$ the following identity holds for $\nu>1$:
\begin{equation}
v\left(  J_{\nu}\right)  \left(  x\right)  =-\frac{1}{2x^{2}}J_{\nu}\left(
x\right)  ^{2}+\frac{4\nu^{2}-1}{2x}\int_{0}^{x}J_{\nu}\left(  t\right)
^{2}\frac{1}{t^{2}}dt\label{eqJW}%
\end{equation}

\end{theorem}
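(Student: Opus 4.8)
The plan is to recognize the stated identity as the specialization of formula (\ref{eqnewff}) to Bessel's equation with left endpoint $a=0$, and to cope with the fact that the coefficients are singular there by a limiting argument. First I would write Bessel's equation $x^2J_\nu''+xJ_\nu'+(x^2-\nu^2)J_\nu=0$ in the normalized form $J_\nu''+pJ_\nu'+qJ_\nu=0$ with $p(x)=1/x$ and $q(x)=1-\nu^2/x^2$, both of which are $C^\infty$ on $(0,\infty)$. Then I would record the ingredients needed for (\ref{eqnewff}): choosing $P(x)=\ln x$ gives $e^P=x$ and $e^{-P}=1/x$, while $p'=-1/x^2$, $p''=2/x^3$ and $q'=2\nu^2/x^3$, so that $p''+p'p-2q'=(2-1-4\nu^2)/x^3=(1-4\nu^2)/x^3$. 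Substituting $p'(x)=-1/x^2$ into the first term of (\ref{eqnewff}) already produces $-\tfrac{1}{2x^2}J_\nu^2$, and substituting $p''+p'p-2q'$ together with $e^P=t$ into the integral turns the integrand into $J_\nu^2(1-4\nu^2)/t^2$; after dividing by $e^{P(x)}=x$ and flipping the sign this yields exactly $\tfrac{4\nu^2-1}{2x}\int_0^x J_\nu^2/t^2\,dt$, matching the claim.

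The one genuine issue is that formula (\ref{eqnewff}) was established under the hypotheses $p,q\in C^2[a,b]$ and $f(a)=f'(a)=0$, whereas here $p$ and $q$ blow up at the natural endpoint $a=0$, so the theorem cannot be quoted verbatim. I would therefore re-run the derivation directly from Proposition \ref{Prop1}, which is valid on all of $(0,\infty)$: multiplying $v'(J_\nu)+\tfrac1x v(J_\nu)=p'J_\nu'J_\nu+q'J_\nu^2$ by the integrating factor $e^P=x$ gives $\tfrac{d}{dx}\big(x\,v(J_\nu)\big)=x\big(p'J_\nu'J_\nu+q'J_\nu^2\big)$, and integrating from $\varepsilon$ to $x$ with $\varepsilon>0$ produces $x\,v(J_\nu)(x)-\varepsilon\,v(J_\nu)(\varepsilon)$ on the left. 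The first term on the right is then integrated by parts exactly as in the proof of (\ref{eqnewff}), using $J_\nu'J_\nu=\tfrac12(J_\nu^2)'$, which contributes a boundary term $-\tfrac12 p'(\varepsilon)e^{P(\varepsilon)}J_\nu^2(\varepsilon)=\tfrac{1}{2\varepsilon}J_\nu^2(\varepsilon)$.

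The crux is to verify that the two boundary contributions at $\varepsilon$ vanish as $\varepsilon\to0$. From the series for $J_\nu$ one has $J_\nu(x)\sim c\,x^\nu$ near $0$; combined with the elementary value $v(x^\alpha)=\alpha x^{2\alpha-2}$ recorded just before Proposition \ref{Prop0}, this gives $v(J_\nu)(x)\sim \nu c^2 x^{2\nu-2}$, whence $\varepsilon\,v(J_\nu)(\varepsilon)\sim \nu c^2\varepsilon^{2\nu-1}$ and $\varepsilon^{-1}J_\nu^2(\varepsilon)\sim c^2\varepsilon^{2\nu-1}$, both of which tend to $0$ precisely because $\nu>1$ (indeed $\nu>\tfrac12$ would already suffice, and the condition $\nu>1$ is exactly what makes both $J_\nu$ and $J_\nu'$ vanish at the origin). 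With these limits established the interior integrals $\int_0^x J_\nu^2(p''+p'p)e^P\,dt$ and $\int_0^x q'J_\nu^2 e^P\,dt$ converge, and letting $\varepsilon\to0$ leaves exactly the specialized version of (\ref{eqnewff}) computed above, which is the asserted identity. I expect the only delicate point to be the justification of these endpoint limits; once the hypothesis $\nu>1$ is invoked, the remainder is the routine substitution already carried out.
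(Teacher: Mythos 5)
Your proposal is correct and takes essentially the same route as the paper's own proof: both specialize formula (\ref{eqnewff}) to Bessel's equation with $p=1/x$, $q=1-\nu^{2}/x^{2}$, computing $e^{P}=x$ and $p^{\prime\prime}+p^{\prime}p-2q^{\prime}=-\left(4\nu^{2}-1\right)/x^{3}$ and then substituting. In fact your $\varepsilon$-truncation argument, which justifies the use of the endpoint $a=0$ despite the singularity of $p$ and $q$ there, is more careful than the paper's proof, which simply notes $J_{\nu}(0)=J_{\nu}^{\prime}(0)=0$ for $\nu>1$ and applies the formula directly.
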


\begin{proof}
We know that $J_{\nu}\left(  0\right)  =J_{\nu}^{\prime}\left(  0\right)  =0$
since $\nu>1.$ Further $J_{\nu}$ satisfies the differential equation%
\begin{equation}
J_{\nu}^{\prime\prime}+\frac{1}{x}J_{\nu}^{\prime}+\left(  1-\frac{\nu^{2}%
}{x^{2}}\right)  J_{\nu}=0.\label{eqJ}%
\end{equation}
Then $p\left(  x\right)  =\frac{1}{x}$ and $p^{\prime}\left(  x\right)
=-\frac{1}{x^{2}}.$ Further $P\left(  x\right)  =\ln x,$ so $e^{-P\left(
x\right)  }=\frac{1}{x}$ and
\[
p^{\prime\prime}+p^{\prime}p-2q^{\prime}=\frac{2}{x^{3}}-\frac{1}{x^{3}%
}-2\left(  \frac{2\nu^{2}}{x^{3}}\right)  =-\frac{1}{x^{3}}\left(  4\nu
^{2}-1\right)  .
\]

\end{proof}

It is a natural to ask whether $v\left(  J_{\nu}\right)  $ is a positive
function for $x>0$. Unfortunately the summands in (\ref{eqJW}) have different
signs for $\nu>1.$ We now present a general criterion for positivity of
$v\left(  f\right)  $:

\begin{theorem}
\label{ThmMain1} Let $p,q\in C^{1}\left(  a,b\right)  $ and assume that the
derivatives $p^{\prime}$ and $q^{\prime}$ do not have a common zero on
$\left(  a.b\right)  $. Let $f$ be a solution of $f^{\prime\prime}+pf^{\prime
}+qf=0$ in $\left(  a,b\right)  $ and assume that $v=f^{\prime}f^{\prime
}-ff^{\prime\prime}$ extends continuously to $\left[  a,b\right]  $ with
$v\left(  a\right)  \geq0$ and $v\left(  b\right)  \geq0.$ If
\[
q\left(  x\right)  -\frac{p\left(  x\right)  }{p^{\prime}\left(  x\right)
}q^{\prime}\left(  x\right)  \ \geq0
\]
for all $x\in\left(  a,b\right)  $ then $v\left(  x\right)  \geq0$ for all
$\left[  a,b\right]  .$
\end{theorem}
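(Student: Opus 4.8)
The plan is to study not $v$ itself but the auxiliary function $V:=e^{P}v$, where $P$ is any antiderivative of $p$, so that $P'=p$ and $e^{P}>0$; thus $V$ has the same sign as $v$. The advantage is that its derivative collapses dramatically: by Proposition \ref{Prop1} (identity (\ref{eqvde})),
\[
V'=e^{P}\left(v'+pv\right)=e^{P}\left(p'f'f+q'f^{2}\right).
\]
I would then argue by contradiction, exploiting that a vanishing of $V'$ forces a very rigid relation between $f$ and $f'$.

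Suppose $v$ is negative at some point of $(a,b)$, and let $(c,d)\subseteq(a,b)$ be the connected component of $\{v<0\}$ containing it; by continuity $v(c)=v(d)=0$, while $v<0$ on $(c,d)$. (Using such a component rather than all of $[a,b]$ avoids any issue with $e^{P}$ degenerating at the endpoints $a,b$, since $c,d$ are ordinary interior points of $(a,b)$.) Minimising the continuous function $V=e^{P}v$ over $[c,d]$, where it vanishes at the endpoints and is strictly negative inside, the minimum is attained at an interior point $x_{0}\in(c,d)$, and there $V'(x_{0})=0$, i.e.
\[
p'(x_{0})f'(x_{0})f(x_{0})+q'(x_{0})f(x_{0})^{2}=0.
\]
Note first that $f(x_{0})\neq0$: otherwise the identity $v=(f')^{2}+pf'f+qf^{2}$ of (\ref{eq1}) would give $v(x_{0})=f'(x_{0})^{2}\geq0$, contradicting $v(x_{0})<0$. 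Dividing by $f(x_{0})$ leaves $p'(x_{0})f'(x_{0})+q'(x_{0})f(x_{0})=0$.

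The assumption that $p'$ and $q'$ have no common zero now does the decisive work. If $p'(x_{0})=0$ then the last relation forces $q'(x_{0})=0$, which is excluded; hence $p'(x_{0})\neq0$, and writing $r:=f'(x_{0})/f(x_{0})$ we get $r=-q'(x_{0})/p'(x_{0})$. Substituting this into the quadratic identity $v/f^{2}=r^{2}+pr+q$ from (\ref{eq1}) and using $pr=-pq'/p'$ yields
\[
\frac{v(x_{0})}{f(x_{0})^{2}}=r^{2}+\left(q(x_{0})-\frac{p(x_{0})}{p'(x_{0})}q'(x_{0})\right),
\]
a sum of a square and, by hypothesis, a nonnegative term; hence $v(x_{0})\geq0$, contradicting $v(x_{0})<0$. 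This contradiction shows $v\geq0$ on $(a,b)$, and the continuous extension gives $v\geq0$ on $[a,b]$.

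The crux --- and the one genuinely clever step --- is the choice to minimise $e^{P}v$ rather than $v$: the factor $e^{P}$ is exactly what makes $V'$ equal to $e^{P}(p'f'f+q'f^{2})$, so that a critical point pins the logarithmic derivative $f'/f$ to precisely $-q'/p'$, the value at which the quadratic form $r^{2}+pr+q$ reduces to $q-\frac{p}{p'}q'$. Everything else is bookkeeping, and the no-common-zero hypothesis is used in exactly one place, to rule out the degenerate case $p'(x_{0})=0$.
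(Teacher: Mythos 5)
Your proposal is, in substance, the paper's own proof: the paper also passes to $V=e^{P}v$, uses Proposition \ref{Prop1} (via (\ref{eqVV})) to get $V'=e^{P}\left(p'f'f+q'f^{2}\right)$, and at a critical point $x_{0}$ runs exactly your dichotomy --- if $p'(x_{0})=0$ the no-common-zero hypothesis forces $f(x_{0})=0$, hence $v(x_{0})=f'(x_{0})^{2}\geq0$, while if $p'(x_{0})\neq0$ one substitutes $f'f=-(q'/p')f^{2}$ into the equation multiplied by $f$ to get $v(x_{0})=f'(x_{0})^{2}+\left(q(x_{0})-\frac{p(x_{0})}{p'(x_{0})}q'(x_{0})\right)f^{2}(x_{0})\geq0$. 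The paper packages this as a minimum principle for $V$ on $[a,b]$ (nonnegative at the endpoints and at every critical point); you package it as a contradiction on a component of $\{v<0\}$. The critical-point computation itself is correct.

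The one statement that is false as written is your parenthetical that the component endpoints $c,d$ are ``ordinary interior points of $(a,b)$.'' The hypotheses give only $v(a)\geq0$ and $v(b)\geq0$, so $v$ may vanish at $a$ and be negative immediately to its right; then the component is $(a,d)$ with $c=a$. In that case your minimization of $V=e^{P}v$ over $[c,d]$ needs $V$ to be continuous up to $c=a$, which is precisely the degeneracy you claimed to have sidestepped: $p$ is only $C^{1}$ on the open interval, so $e^{P}$ may tend to $+\infty$ at $a$, in which case $V$ need not extend continuously there, its infimum over $(a,d]$ need not be attained, and a priori $V$ could even be monotone on $(a,d]$, leaving no critical point to feed into the computation. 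So passing to components confines the difficulty to the case where a component abuts $a$ or $b$; it does not eliminate it. In fairness, the paper's proof glosses over the identical point --- it simply asserts ``$V(a)\geq0$ and $V(b)\geq0$,'' which presupposes that $e^{P}v$ has appropriate endpoint limits --- and in the paper's actual application (the corollary on $J_{\nu}$, where $e^{P}=x$) the factor $e^{P}$ tends to $0$ at the left endpoint and to a finite positive limit at the right one, so both arguments are sound there. Your write-up would be complete if you either handled the abutting case by verifying that $e^{P}v$ extends continuously (with value $\geq0$) to that endpoint, or stated this as an explicit additional hypothesis, as the theorem's generality ($p\in C^{1}(a,b)$ only) otherwise leaves a genuine loose end.
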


\begin{proof}
Let $P\left(  x\right)  $ be an anti-derivative of $p\left(  x\right)  .$ We
consider the function $V\left(  x\right)  =e^{P\left(  x\right)  }v\left(
x\right)  $. Our assumptions imply that $V\left(  a\right)  \geq0$ and
$V\left(  b\right)  \geq0.$ Then
\begin{equation}
e^{-P\left(  x\right)  }V^{\prime}\left(  x\right)  =e^{-P\left(  x\right)
}\frac{d}{dx}\left(  e^{P\left(  x\right)  }v\left(  x\right)  \right)
=v^{\prime}\left(  x\right)  +p\left(  x\right)  v\left(  x\right)
\label{eqVV}%
\end{equation}
for all $x\in\left(  a,b\right)  .$ Let $x_{0}\in\left(  a,b\right)  $ be a
critical point of $V\left(  x\right)  .$ We will show that $V\left(
x_{0}\right)  \geq0.$ Formula (\ref{eqVV}) and Proposition \ref{Prop1} show
that
\[
0=v^{\prime}\left(  x_{0}\right)  +p\left(  x_{0}\right)  v\left(
x_{0}\right)  =p^{\prime}\left(  x_{0}\right)  f^{\prime}\left(  x_{0}\right)
f\left(  x_{0}\right)  +q^{\prime}\left(  x_{0}\right)  f^{2}\left(
x_{0}\right)  .
\]
If $p^{\prime}\left(  x_{0}\right)  =0$ we see that $q^{\prime}\left(
x_{0}\right)  f^{2}\left(  x_{0}\right)  =0.$ Since $p^{\prime}$ and
$q^{\prime}$ do have a common zero we infer that $q^{\prime}\left(
x_{0}\right)  \neq0$ and therefore $f\left(  x_{0}\right)  =0.$ Then $v\left(
x_{0}\right)  =f^{\prime}\left(  x_{0}\right)  ^{2}$ and $V\left(
x_{0}\right)  \geq0.$ Now assume that $p^{\prime}\left(  x_{0}\right)  \neq0.$
Then
\[
f^{\prime}\left(  x_{0}\right)  f\left(  x_{0}\right)  =-\frac{q^{\prime
}\left(  x_{0}\right)  }{p^{\prime}\left(  x_{0}\right)  }f^{2}\left(
x_{0}\right)  .
\]
Multiply the equation $f^{\prime\prime}+pf^{\prime}+qf=0$ with $f.$ Then
\[
0=f^{\prime\prime}\left(  x_{0}\right)  f\left(  x_{0}\right)  +\left(
q\left(  x_{0}\right)  -\frac{p\left(  x_{0}\right)  }{p^{\prime}\left(
x_{0}\right)  }q^{\prime}\left(  x_{0}\right)  \right)  f^{2}\left(
x_{0}\right)
\]
and
\begin{align*}
v\left(  x_{0}\right)   & =f^{\prime}\left(  x_{0}\right)  f^{\prime}\left(
x_{0}\right)  -f^{\prime\prime}\left(  x_{0}\right)  f\left(  x_{0}\right) \\
& =f^{\prime}\left(  x_{0}\right)  f^{\prime}\left(  x_{0}\right)  +\left(
q\left(  x_{0}\right)  -\frac{p\left(  x_{0}\right)  }{p^{\prime}\left(
x_{0}\right)  }q^{\prime}\left(  x_{0}\right)  \right)  f^{2}\left(
x_{0}\right)  .
\end{align*}
By assumption $q-\frac{p}{p^{\prime}}q^{\prime}\geq0.$ It follows that
$V\left(  x_{0}\right)  \geq0$ for any critical point of $V,$ and $V$ is
non-negative on the endpoints of the interval. So $V\left(  x\right)  \geq0$
and $v\left(  x\right)  \geq0$ for all $x\in\left[  a,b\right]  .$
\end{proof}

\begin{corollary}
Let $\nu$ be a real non-negative number. Then $v\left(  J_{\nu}\right)
\left(  x\right)  \geq0$ for all $x\geq0.$
\end{corollary}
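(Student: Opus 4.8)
The plan is to apply Theorem \ref{ThmMain1} to the Bessel equation (\ref{eqJ}), for which $p(x)=\frac{1}{x}$ and $q(x)=1-\frac{\nu^{2}}{x^{2}}$. First I would record the two structural hypotheses. Since $p^{\prime}(x)=-\frac{1}{x^{2}}$ never vanishes on $(0,\infty)$, the derivatives $p^{\prime}$ and $q^{\prime}$ have no common zero there. Next, with $q^{\prime}(x)=\frac{2\nu^{2}}{x^{3}}$ and $\frac{p}{p^{\prime}}=-x$, a one-line computation gives $q-\frac{p}{p^{\prime}}q^{\prime}=1-\frac{\nu^{2}}{x^{2}}+\frac{2\nu^{2}}{x^{2}}=1+\frac{\nu^{2}}{x^{2}}\geq0$ for all $x>0$. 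Thus the sign hypothesis of Theorem \ref{ThmMain1} holds on every closed subinterval $[a,b]\subset(0,\infty)$, and the theorem's conclusion will follow on $[a,b]$ as soon as I supply nonnegative boundary values $v(a)\geq0$ and $v(b)\geq0$.

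Producing these endpoint values is the only real work, and I would obtain them by direct estimates at the two ends. Near the origin I would use $J_{\nu}(x)=c\,x^{\nu}(1+O(x^{2}))$ with $c=(2^{\nu}\Gamma(\nu+1))^{-1}$; together with the identity $v(x^{\nu})=\nu x^{2\nu-2}$ recorded before Proposition \ref{Prop0}, this yields $v(J_{\nu})(x)=c^{2}\nu\,x^{2\nu-2}+O(x^{2\nu})$, which is strictly positive for small $x>0$ when $\nu>0$ (and for $\nu=0$ one instead checks $v(J_{0})(x)\to\frac{1}{2}>0$). For large $x$ I would avoid Bessel asymptotics by rewriting $v$ through the substitution (\ref{eq1}) as the quadratic form $v=(J_{\nu}^{\prime})^{2}+\frac{1}{x}J_{\nu}^{\prime}J_{\nu}+(1-\frac{\nu^{2}}{x^{2}})J_{\nu}^{2}$ in the pair $(J_{\nu}^{\prime},J_{\nu})$; its matrix has positive leading entry and determinant $1-\frac{4\nu^{2}+1}{4x^{2}}$, which is positive once $x>\frac{1}{2}\sqrt{4\nu^{2}+1}$. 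Since a nontrivial solution of (\ref{eqJ}) cannot have $J_{\nu}$ and $J_{\nu}^{\prime}$ vanish simultaneously, this forces $v(J_{\nu})(x)>0$ for all such $x$.

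To finish, given an arbitrary $x_{0}>0$ I would choose $a\in(0,x_{0})$ small enough that $v(J_{\nu})(a)>0$ and $b>\max\{x_{0},\tfrac{1}{2}\sqrt{4\nu^{2}+1}\}$ so that $v(J_{\nu})(b)>0$. All hypotheses of Theorem \ref{ThmMain1} then hold on $[a,b]$, giving $v(J_{\nu})\geq0$ throughout $[a,b]$ and in particular at $x_{0}$. As $x_{0}$ was arbitrary, $v(J_{\nu})\geq0$ on $(0,\infty)$, and the value at $x=0$ follows by continuity when $\nu\geq1$ (the case $\nu=0$ being the computed value $\tfrac{1}{2}$).

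I expect the endpoint estimates to be the main obstacle rather than the invocation of Theorem \ref{ThmMain1}: the large-$x$ positivity via the discriminant of (\ref{eq1}) is clean, but the behaviour near $0$ must be handled with some care across the full range $\nu\geq0$, since for $0<\nu<1$ the function $v(J_{\nu})$ blows up at the origin and does not extend continuously to $0$, so only an interior left endpoint $a>0$ can be used.
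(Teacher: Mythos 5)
Your proof is correct, and it invokes the same key result (Theorem \ref{ThmMain1}) with the identical sign computation $q-\frac{p}{p'}q'=1+\frac{\nu^{2}}{x^{2}}\geq 0$; where you genuinely depart from the paper is in how the endpoint nonnegativity is supplied, which is the only substantive step. The paper applies the theorem to the intervals $\left(0,j_{\nu,k}\right)$: at the right endpoint, a zero of $J_{\nu}$, one gets $v\left(J_{\nu}\right)\left(j_{\nu,k}\right)=J_{\nu}'\left(j_{\nu,k}\right)^{2}\geq 0$ for free (the paper misprints this without the prime), and letting $k\rightarrow\infty$ exhausts the half-line; at the left endpoint it simply asserts $v\left(J_{\nu}\right)\left(0\right)=0$. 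You instead take both endpoints in the interior: near the origin the series expansion of $J_{\nu}$ together with Proposition \ref{Prop0} gives $v\left(J_{\nu}\right)\left(a\right)>0$ for small $a>0$, and for $b>\frac{1}{2}\sqrt{4\nu^{2}+1}$ you note that (\ref{eq1}) exhibits $v$ as a positive definite quadratic form in $\left(J_{\nu}',J_{\nu}\right)$, which cannot vanish simultaneously at a regular point of the equation. Each route has its merits: the paper's is shorter, needs no asymptotics, and only uses the standard fact that $J_{\nu}$ has arbitrarily large zeros; but its left-endpoint claim is actually careless, since $v\left(J_{0}\right)\left(0\right)=\frac{1}{2}$, $v\left(J_{1}\right)\left(0\right)=\frac{1}{4}$, and for $0<\nu<1$ the function $v\left(J_{\nu}\right)$ blows up at the origin, so it does not extend continuously to $\left[0,j_{\nu,k}\right]$ as Theorem \ref{ThmMain1} formally requires. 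Your interior-endpoint version sidesteps exactly this defect (which you explicitly flag), at the cost of two elementary endpoint estimates, and it requires no knowledge of the zero set of $J_{\nu}$.
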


\begin{proof}
Since $J_{\nu}$ satisfies the differential equation (\ref{eqJ}) we see that
\[
q\left(  x\right)  -\frac{p\left(  x\right)  }{p^{\prime}\left(  x\right)
}q^{\prime}\left(  x\right)  =\frac{x^{2}+\nu^{2}}{x^{2}}.
\]
Now we apply Theorem \ref{ThmMain1} to the interval $\left(  0,j_{\nu
,k}\right)  $ where $j_{\nu,k}$ is the $k$-th positive zero of $J_{\nu}.$ Then
$v\left(  J_{\nu}\right)  \left(  0\right)  =0$ since $\nu\geq0$ and $v\left(
J_{\nu}\right)  \left(  j_{\nu,k}\right)  =J_{\nu}\left(  j_{\nu,k}\right)
^{2}\geq0.$
\end{proof}

The following figure shows that $v\left(  J_{3.4}\right)  $ is not an
increasing function:
\begin{center}
\fbox{\includegraphics[
height=1.5056in,
width=3.0113in
]
{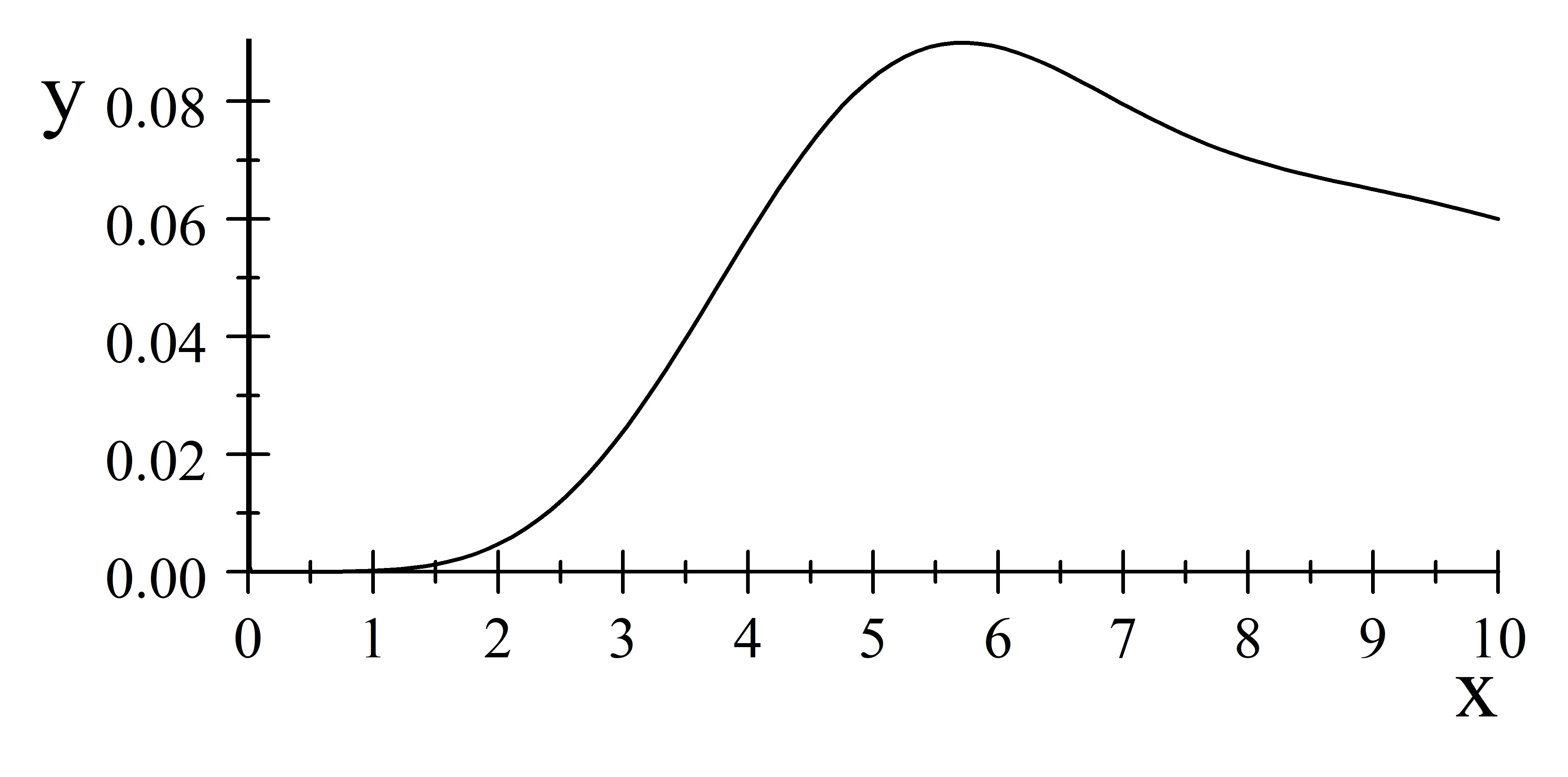}
}\\
Graph of $v\left(  J_{3.4}\right)  $
\end{center}

Next we turn to the question under which conditions $v\left(  f\right)  $ is increasing.

\begin{proposition}
\label{Prop2}Let $p,q\in C^{2}\left(  a,b\right)  .$ If $f$ is a solution of
$f^{\prime\prime}+pf^{\prime}+qf=0$ then
\begin{equation}
v^{\prime\prime}+\left(  p-\frac{p^{\prime\prime}}{p^{\prime}}\right)
v^{\prime}+\left(  2p^{\prime}-\frac{p^{\prime\prime}p}{p^{\prime}}\right)
v=2p^{\prime}f^{\prime}f^{\prime}+f^{2}p^{\prime}\frac{d}{dx}\frac{q^{\prime}%
}{p^{\prime}}+2q^{\prime}f^{\prime}f\ \label{eqLHS1}%
\end{equation}
for all $x\in\left(  a,b\right)  $ with $p^{\prime}\left(  x\right)  \neq0.$
\end{proposition}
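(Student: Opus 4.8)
The plan is to obtain this second-order identity by differentiating the first-order relation of Proposition \ref{Prop1}, namely $v'+pv=p'f'f+q'f^{2}$, and then re-expressing everything through $v$ and $v'$. The decisive preliminary observation is that the left-hand side of (\ref{eqLHS1}) regroups as
\[
v''+\left(p-\frac{p''}{p'}\right)v'+\left(2p'-\frac{p''p}{p'}\right)v=\left(v''+pv'+2p'v\right)-\frac{p''}{p'}\left(v'+pv\right),
\]
where the last bracket is precisely the left-hand side of the Proposition \ref{Prop1} identity. This is what makes the factor $p''/p'$, and the attendant restriction $p'\neq0$, appear naturally, and it reduces the whole problem to substituting two instances of Proposition \ref{Prop1}.

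First I would differentiate $v'+pv=p'f'f+q'f^{2}$ to get $v''+p'v+pv'=(p'f'f+q'f^{2})'$, expand the right-hand side by the product rule, and eliminate the resulting $f''$ via the differential equation $f''=-pf'-qf$. This yields $v''+pv'=-p'v+p''f'f+p'(f')^{2}-pp'f'f-p'qf^{2}+q''f^{2}+2q'f'f$. Inserting this, together with the replacement $v'+pv=p'f'f+q'f^{2}$, into the regrouped left-hand side above, the term $\frac{p''}{p'}(v'+pv)$ contributes $-p''f'f-\frac{p''q'}{p'}f^{2}$, so that the two $p''f'f$ contributions cancel, and the $-p'v$ is promoted to $+p'v$ by the $+2p'v$ summand.

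The final simplification uses the closed form $v=(f')^{2}+pf'f+qf^{2}$, obtained by inserting the differential equation into $v=(f')^{2}-f''f$ exactly as in the proof of Proposition \ref{Prop1}. Multiplying by $p'$ and adding, the $pp'f'f$ and $p'qf^{2}$ terms cancel against their counterparts, leaving precisely $2p'(f')^{2}$ from the $(f')^{2}$-contributions; the surviving $f^{2}$-terms combine to $\left(q''-\frac{p''q'}{p'}\right)f^{2}=p'\,\frac{q''p'-q'p''}{(p')^{2}}\,f^{2}=p'f^{2}\frac{d}{dx}\frac{q'}{p'}$, which together with the remaining $2q'f'f$ is exactly the right-hand side of (\ref{eqLHS1}). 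I expect no serious obstacle: the argument is an exact computation, and the only genuine idea is the regrouping in the first paragraph, which simultaneously isolates the Proposition \ref{Prop1} relation and forces the quotient-rule combination $\frac{d}{dx}(q'/p')$ to emerge; everything else is careful bookkeeping of cancellations.
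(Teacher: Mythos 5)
Your proposal is correct and takes essentially the same route as the paper: the paper divides the Proposition \ref{Prop1} identity by $p'$, differentiates the quotient, and multiplies back by $p'$, and the quotient rule there produces exactly the regrouping $(v''+pv'+2p'v)-\frac{p''}{p'}(v'+pv)$ that you start from. The only cosmetic differences are that you differentiate before dividing and eliminate $f''$ via the ODE together with the closed form $v=(f')^{2}+pf'f+qf^{2}$, where the paper instead uses the definitional identity $f''f+f'f'=2f'f'-v$.
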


\begin{proof}
Proposition \ref{Prop1} yields $v^{\prime}+pv=p^{\prime}f^{\prime}f+q^{\prime
}f^{2}$ and it follows that
\[
\frac{d}{dx}\frac{v^{\prime}+pv}{p^{\prime}}=\frac{d}{dx}\left(  f^{\prime
}f+\frac{q^{\prime}}{p^{\prime}}f^{2}\right)  =f^{\prime\prime}f+f^{\prime
}f^{\prime}+f^{2}\frac{d}{dx}\frac{q^{\prime}}{p^{\prime}}+2\frac{q^{\prime}%
}{p^{\prime}}f^{\prime}f.
\]
Since $f^{\prime\prime}f+f^{\prime}f^{\prime}=2f^{\prime}f^{\prime}-v$ we
obtain
\begin{equation}
\frac{d}{dx}\frac{v^{\prime}+pv}{p^{\prime}}+v=2f^{\prime}f^{\prime}%
+f^{2}\frac{d}{dx}\frac{q^{\prime}}{p^{\prime}}+2\frac{q^{\prime}}{p^{\prime}%
}f^{\prime}f.\label{eqLHS2}%
\end{equation}
It is easy to see that
\[
p^{\prime}\left(  \frac{d}{dx}\frac{v^{\prime}+pv}{p^{\prime}}+v\right)
=\left(  v^{\prime\prime}+p^{\prime}v+pv^{\prime}\right)  -\left(  v^{\prime
}+pv\right)  \frac{p^{\prime\prime}}{p^{\prime}}+p^{\prime}v
\]
which is equal to the left hand side in (\ref{eqLHS1}). The proof is complete.
\end{proof}

\begin{corollary}
\label{Cor2}Let $f_{n}\left(  x\right)  $ be defined as in (\ref{eqdeffn}).
Then $v\left(  f_{n}\right)  $ and $v^{\prime}\left(  f_{n}\right)  $ are
positive and increasing on $\left(  0,\infty\right)  $. Further $v^{\prime
\prime}\left(  f_{n}\right)  \geq0$ on $\left(  0,\infty\right)  .$
\end{corollary}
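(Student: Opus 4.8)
The positivity of $v(f_{n})$ on $(0,\infty)$ is already available from the explicit integral representation for $v(f_{n})$ established above, so the remaining work is to control $v^{\prime}(f_{n})$ and $v^{\prime\prime}(f_{n})$. The plan is to feed the specific coefficients of the differential equation (\ref{eqdefnewf}), namely $p=-\tfrac{2n}{x}$ and $q=1$, into the second-order identity of Proposition \ref{Prop2}. Since $q$ is constant we have $q^{\prime}=0$, so the entire right-hand side of (\ref{eqLHS1}) collapses to $2p^{\prime}(f_{n}^{\prime})^{2}=\tfrac{4n}{x^{2}}(f_{n}^{\prime})^{2}$. First I would compute the two coefficients on the left: one finds $p-\tfrac{p^{\prime\prime}}{p^{\prime}}=\tfrac{2-2n}{x}$ and, the decisive simplification, $2p^{\prime}-\tfrac{p^{\prime\prime}p}{p^{\prime}}=\tfrac{4n}{x^{2}}-\tfrac{4n}{x^{2}}=0$. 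Thus the $v$-term drops out entirely and Proposition \ref{Prop2} (which applies on $(0,\infty)$ since $p^{\prime}=\tfrac{2n}{x^{2}}\neq0$ for $n\geq1$) reduces to the clean relation
\[
v^{\prime\prime}(f_{n})+\frac{2(1-n)}{x}\,v^{\prime}(f_{n})=\frac{4n}{x^{2}}\bigl(f_{n}^{\prime}\bigr)^{2},
\]
which is in fact first order in the unknown $w:=v^{\prime}(f_{n})$.

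Next I would solve this first-order linear ODE for $w$. The integrating factor is $x^{2-2n}$, giving $\tfrac{d}{dx}\bigl(x^{2-2n}v^{\prime}(f_{n})\bigr)=4n\,x^{-2n}(f_{n}^{\prime})^{2}$. Integrating and pinning down the constant is where the boundary behaviour at the origin enters: since $f_{n}$ has a zero of order $2n+1$ at $x=0$, the function $f_{n}^{\prime}$ behaves like a constant times $x^{2n}$, so $v(f_{n})$ vanishes to order $4n$ and $v^{\prime}(f_{n})$ to order $4n-1$; hence $x^{2-2n}v^{\prime}(f_{n})=O(x^{2n+1})\to0$ as $x\to0^{+}$, and the same estimate makes $\int_{0}^{x}t^{-2n}(f_{n}^{\prime})^{2}\,dt$ convergent at the origin. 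Therefore the integration constant vanishes and
\[
v^{\prime}(f_{n})(x)=4n\,x^{2n-2}\int_{0}^{x}t^{-2n}\,f_{n}^{\prime}(t)^{2}\,dt .
\]
For $n\geq1$ the integrand is non-negative and, $f_{n}$ being real-analytic and non-constant, not identically zero on any subinterval, so $v^{\prime}(f_{n})(x)>0$ for every $x>0$; this is precisely the assertion that $v(f_{n})$ is positive and strictly increasing.

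Finally, for the second derivative I would simply read off from the displayed relation that
\[
v^{\prime\prime}(f_{n})=\frac{4n}{x^{2}}\bigl(f_{n}^{\prime}\bigr)^{2}+\frac{2(n-1)}{x}\,v^{\prime}(f_{n}) .
\]
For $n\geq1$ both summands are non-negative — the first trivially, the second because $n-1\geq0$ and $v^{\prime}(f_{n})>0$ by the previous step — so $v^{\prime\prime}(f_{n})\geq0$, i.e. $v^{\prime}(f_{n})$ is increasing. The step I expect to be the main obstacle is not any single computation but the rigorous justification that the integration constant is zero: this rests on the exact order of vanishing of $f_{n}$ at the origin and on the convergence of the resulting integral, both to be read off from (\ref{eqdeffn}). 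I would also dispose of the degenerate case $n=0$ separately, where $f_{0}(x)=\sin x$, $p\equiv0$ (so Proposition \ref{Prop2} does not apply), and $v(f_{0})\equiv1$, consistent with the statement read in the non-strict sense.
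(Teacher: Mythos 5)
Your proposal is correct and follows essentially the same route as the paper's own proof: apply Proposition \ref{Prop2} with $p=-2n/x$, $q=1$, observe that the coefficient $2p^{\prime}-p^{\prime\prime}p/p^{\prime}$ vanishes, integrate the resulting first-order equation for $v^{\prime}$ via the integrating factor $x^{2-2n}$ using the vanishing at the origin, and then read off $v^{\prime\prime}\geq 0$ from the same equation. The only differences are cosmetic: you write the solution as an explicit integral rather than a monotonicity statement, you obtain the correct constant $\tfrac{4n}{x^{2}}$ on the right-hand side (the paper's display has a harmless factor-of-two slip, showing $\tfrac{2n}{x^{2}}$), and you treat the degenerate case $n=0$ separately, which the paper leaves implicit.
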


\begin{proof}
We apply Proposition \ref{Prop2} with $p=-2n/x.$ Observe that $2p^{\prime
}-\frac{p^{\prime\prime}p}{p^{\prime}}=0.$ Further
\[
p-\frac{p^{\prime\prime}}{p^{\prime}}=\left(  \frac{-2n}{x}\right)
-\frac{\frac{d^{2}}{dx^{2}}\left(  \frac{-2n}{x}\right)  }{\frac{d}{dx}\left(
\frac{-2n}{x}\right)  }=-\frac{2\left(  n-1\right)  }{x}.
\]
It follows that $v=v\left(  f_{n}\right)  $ satisfies the equation
\[
v^{\prime\prime}-\frac{2\left(  n-1\right)  }{x}v^{\prime}=\frac{2n}{x^{2}%
}f^{\prime}f^{\prime}.
\]
It is easy to see that
\[
x^{2n-2}\frac{d}{dx}\left(  x^{2-2n}v^{\prime}\right)  =v^{\prime\prime}%
-\frac{2\left(  n-1\right)  }{x}v^{\prime}=\frac{2n}{x^{2}}\left(  f^{\prime
}\right)  ^{2}\geq0.
\]
It follows that $x^{2-2n}v^{\prime}\left(  x\right)  $ is increasing. Since
$v^{\prime}\left(  0\right)  =0$ we conclude that $v^{\prime}\geq0$ on
$\left(  0,\infty\right)  .$ Further $v^{\prime}\left(  x\right)  $ is
increasing since it is a product two positive increasing functions $x^{2n-2}$
and $x^{2-2n}v^{\prime}\left(  x\right)  .$ Since $v^{\prime}\left(  0\right)
=0$ it follows that $v^{\prime}\geq0$ on $\left(  0,\infty\right)  .$ It
follows that $v$ is increasing on $\left(  0,\infty\right)  .$ Finally we see
that
\[
v^{\prime\prime}\left(  x\right)  =\frac{2n}{x^{2}}f^{\prime}\left(  x\right)
^{2}+\frac{2\left(  n-1\right)  }{x}v^{\prime}\left(  x\right)
\]
is non-negative since we have shown that $v^{\prime}\left(  x\right)  \geq0$
for all $x>0.$
\end{proof}

As mentioned in \cite[p. 4]{CMP23} one has%
\[
f_{0}=\sin x,\qquad f_{1}\left(  x\right)  =\sin x-x\cos x,\qquad
f_{2}=\left(  3-x^{2}\right)  \sin x-3x\cos x.
\]
It is easy to see that
\[
v\left(  f_{0}\right)  \left(  x\right)  =\cos^{2}x+\sin^{2}x=1\text{ and
}v\left(  f_{1}\right)  \left(  x\right)  =\frac{1}{2}\cos2x+x^{2}-\frac{1}%
{2}.
\]
Thus all derivatives of $v\left(  f_{0}\right)  $ are non-negative (and zero).
Further $v\left(  f_{1}\right)  $ is positive and increasing on $\left(
0,\infty\right)  $, and the same is true for $v^{\prime}\left(  x\right)
=2x-\sin2x$. Further $v^{\prime\prime}\left(  x\right)  =2-2\cos2x\geq0$ but
$v^{\prime\prime}$ is not increasing since $v^{\prime\prime\prime}\left(
x\right)  =4\sin2x$ changes its sign -- so higher derivatives of $v$ of order
$\geq3$ do not have a nice monotonicity properties.

We conclude this section with the following result:

\begin{proposition}
Let $p,q\in C^{1}\left[  a,b\right]  $ and $f$ a solution of $f^{\prime\prime
}+pf^{\prime}+qf=0.$ Then
\[
v\left(  f^{\prime}\right)  =p^{\prime}f^{2}+q^{\prime}f^{\prime}f+qv\left(
f\right)  .
\]
In particular, if $q=1$ and $p^{\prime}\geq0$ then $v\left(  f^{\prime
}\right)  \geq v\left(  f\right)  .$
\end{proposition}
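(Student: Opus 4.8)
The plan is to compute $v(f')$ directly from its definition $v(f')=(f'')^2-f'''f'$ and to eliminate $f''$ and $f'''$ by means of the differential equation, arranging the algebra so that the combination defining $v(f)$ reappears intact. First I would record the two consequences of the ODE that I need. The equation itself gives $f''=-pf'-qf$, which I prefer in the grouped form $f''+pf'=-qf$; differentiating $f''+pf'+qf=0$ once yields $f'''=-p'f'-pf''-q'f-qf'$, expressing the third derivative in terms of $f,f',f''$ and the coefficients.

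Substituting this expression for $f'''$ into $-f'''f'$ and adding $(f'')^2$ produces
\[ v(f')=(f'')^2+p'(f')^2+pf''f'+q'f'f+q(f')^2. \]
Rather than expand $(f'')^2$, the key step is to group $(f'')^2+pf''f'=f''(f''+pf')$ and invoke the ODE in the form $f''+pf'=-qf$, which collapses this pair of terms to $-qf''f$. What then remains is
\[ v(f')=-qf''f+q(f')^2+p'(f')^2+q'f'f=q\bigl((f')^2-f''f\bigr)+p'(f')^2+q'f'f. \]
Recognizing the bracket as $v(f)$ gives the identity with $q\,v(f)$ as one summand and $p'(f')^2+q'f'f$ as the lower-order remainder.

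For the special case, setting $q\equiv 1$ forces $q'\equiv 0$, so the identity reduces to $v(f')=v(f)+p'(f')^2$; since $(f')^2\ge 0$, the hypothesis $p'\ge 0$ immediately yields $v(f')\ge v(f)$. I do not anticipate a genuine obstacle here, as the argument is a short computation; the only real subtlety is bookkeeping. The place where a careless expansion would create needless work is the quadratic part in $f''$, so the plan hinges on performing the two substitutions in the right order, namely grouping $f''(f''+pf')=-qff''$ via the ODE and identifying $(f')^2-f''f$ as $v(f)$ before collecting the surviving lower-order terms. Doing these in sequence is precisely what keeps the derivation to a few lines and makes the $q\,v(f)$ term emerge without ever writing out $p^2(f')^2$, $q^2f^2$, and the like.
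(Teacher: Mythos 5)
Your proof is correct and takes essentially the same route as the paper: both substitute $f''=-pf'-qf$ and the differentiated equation $f'''=-p'f'-pf''-q'f-qf'$ into $v(f')=(f'')^2-f'''f'$ and recollect terms to expose $v(f)$, your grouping $f''(f''+pf')=-qff''$ being merely a tidier bookkeeping than the paper's full expansion of $(pf'+qf)^2$ followed by back-substitution of $v(f)=qf^2+pf'f+(f')^2$. Note that you and the paper's own proof both arrive at the term $p'(f')^2$, not the $p'f^2$ printed in the statement, so the statement contains a typo --- harmless for the ``in particular'' conclusion, since either term is nonnegative when $p'\geq 0$.
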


\begin{proof}
Since $v\left(  f^{\prime}\right)  =f^{\prime\prime}f^{\prime\prime}%
-f^{\prime}f^{\prime\prime\prime}$ and $\frac{d}{dx}\left(  f^{\prime\prime
}+pf^{\prime}+qf\right)  =0$ we see that
\begin{align*}
v\left(  f^{\prime}\right)   & =\left(  pf^{\prime}+qf\right)  ^{2}-f^{\prime
}\left(  -p^{\prime}f^{\prime}-pf^{\prime\prime}-qf^{\prime}-q^{\prime
}f\right) \\
& =p^{2}f^{\prime2}+2pqf^{\prime}f+q^{2}f^{2}+p^{\prime}f^{\prime2}%
+pf^{\prime}\left(  -pf^{\prime}-qf\right)  +qf^{\prime}f^{\prime}+q^{\prime
}f^{\prime}f\\
& =\left(  p^{\prime}+q\right)  f^{\prime2}+\left(  pq+q^{\prime}\right)
f^{\prime}f+q^{2}f^{2}.
\end{align*}
$\ $Since $v\left(  f\right)  =qf^{2}+pf^{\prime}f+f^{\prime2}$ we can write
\begin{align*}
v\left(  f^{\prime}\right)   & =\left(  q+p^{\prime}\right)  f^{\prime
2}+\left(  pq+q^{\prime}\right)  f^{\prime}f+q\left(  v\left(  f\right)
-f^{\prime}f^{\prime}-pf^{\prime}f\right) \\
& =p^{\prime}f^{\prime2}+q^{\prime}f^{\prime}f+qv\left(  f\right)  .
\end{align*}
The proof is complete.
\end{proof}

\section{The function $w\left(  f\right)  $}

In this section we want to compute the Wronski-determinant of the system
$f,f^{\prime},f^{\prime\prime}.$ The basic idea is to replace all derivatives
$f^{\left(  k\right)  }$ in the Wronski matrix with $k\geq2$ with expressions
which depends only on $f,f^{\prime}$ using the assumption that $f$ is a
solution of the differential equation $f^{\prime\prime}+pf^{\prime}+qf=0.$ We
begin with the following result:

\begin{theorem}
\label{ThmMain2}Let $f,p,q\in C^{2}\left(  \left[  a,b\right]  ,\mathbb{C}%
\right)  $ and $f$ be a solution of the differential equation $f^{\prime
\prime}+pf^{\prime}+qf=0$. Then
\begin{equation}
w:=\det\left(
\begin{array}
[c]{ccc}%
f^{\prime\prime} & f^{\prime} & f\\
f^{\prime\prime\prime} & f^{\prime\prime} & f^{\prime}\\
f^{\left(  4\right)  } & f^{\prime\prime\prime} & f^{\prime\prime}%
\end{array}
\right)  =\left(  p^{\prime}f^{\prime}+q^{\prime}f\right)  ^{2}f-A\cdot
v\label{eqw1}%
\end{equation}
where $v=f^{\prime}f^{\prime}-f^{\prime\prime}f$, and $A$ is given by the
equivalent formulae:
\begin{align*}
A  & =2p^{\prime}f^{\prime\prime}+\left(  p^{\prime\prime}+p^{\prime
}p+2q^{\prime}\right)  f^{\prime}+\left(  q^{\prime\prime}+pq^{\prime}\right)
f\\
A  & =\left(  p^{\prime\prime}-p^{\prime}p+2q^{\prime}\right)  f^{\prime
}+\left(  -2p^{\prime}q+q^{\prime\prime}+pq^{\prime}\right)  f.
\end{align*}

\end{theorem}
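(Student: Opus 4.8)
The plan is to eliminate the higher derivatives $f'''$ and $f^{(4)}$ from the determinant by differentiating the relation $f''+pf'+qf=0$, and then to use a single column operation tailored to the differential equation. First I would record the identities coming from the ODE and its first two derivatives, namely
\[
f''+pf'+qf=0,\qquad f'''+pf''+qf'=-\left(p'f'+q'f\right),
\]
and
\[
f^{(4)}+pf'''+qf''=-2p'f''-\left(p''+2q'\right)f'-q''f ,
\]
the latter two being just $\frac{d}{dx}$ and $\frac{d^{2}}{dx^{2}}$ of $f''+pf'+qf=0$, rearranged so that the combination $f^{(k+2)}+pf^{(k+1)}+qf^{(k)}$ stands on the left.

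Next I would apply the column operation $C_{1}\mapsto C_{1}+p\,C_{2}+q\,C_{3}$ to the matrix defining $w$; this does not change the determinant. By the three identities above the new first column is exactly $\left(0,\,-(p'f'+q'f),\,-2p'f''-(p''+2q')f'-q''f\right)^{\!T}$, so its top entry vanishes. Expanding the determinant along this column leaves only two surviving terms, whose $2\times2$ minors are $\det\left(\begin{smallmatrix} f' & f\\ f''' & f''\end{smallmatrix}\right)=f'f''-ff'''$ and $\det\left(\begin{smallmatrix} f' & f\\ f'' & f'\end{smallmatrix}\right)=(f')^{2}-ff''=v$. Keeping track of the cofactor signs, this yields
\[
w=\left(p'f'+q'f\right)\left(f'f''-ff'''\right)-\left(2p'f''+\left(p''+2q'\right)f'+q''f\right)v .
\]

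The last step is to rewrite the first summand in terms of $v$. A direct differentiation gives $v'=f'f''-ff'''$, and Proposition \ref{Prop1} gives $v'+pv=p'f'f+q'f^{2}$, i.e. $v'=\left(p'f'+q'f\right)f-pv$. Substituting this and merging the resulting $-p(p'f'+q'f)v$ into the coefficient of $v$ produces precisely $\left(p'f'+q'f\right)^{2}f-A\,v$ with $A=2p'f''+(p''+p'p+2q')f'+(q''+pq')f$, which is the first of the two stated formulas. The second formula then follows at once by substituting $f''=-pf'-qf$ into the term $2p'f''$ and collecting.

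I expect the only real obstacle to be bookkeeping: getting the coefficients in the twice-differentiated equation right and tracking the cofactor signs, rather than any conceptual difficulty. The single genuinely clever move is the column operation $C_{1}\mapsto C_{1}+pC_{2}+qC_{3}$, which simultaneously annihilates the $(1,1)$ entry via the ODE and manufactures both the factor $p'f'+q'f$ and the determinant $v$ that appear in the target identity; once it is in place, the remaining manipulations are essentially forced.
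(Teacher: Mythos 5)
Your proposal is correct and follows essentially the same route as the paper's own proof: the column operation $C_{1}\mapsto C_{1}+pC_{2}+qC_{3}$ combined with the differentiated ODE, expansion along the resulting first column, the identification $v'=f'f''-ff'''$, the substitution $v'=(p'f'+q'f)f-pv$ from Proposition \ref{Prop1}, and finally $f''=-pf'-qf$ to obtain the second formula for $A$. The cofactor signs and coefficients in your expansion all check out, so there is nothing to add.
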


\begin{remark}
The identity (\ref{eqw1}) can be used for evaluating $w$ at special points:
e.g. for any zero $x_{0}$ of $f$ the following identity holds:
\[
w\left(  x_{0}\right)  =-\left[  p^{\prime\prime}\left(  x_{0}\right)
-p^{\prime}\left(  x_{0}\right)  p\left(  x_{0}\right)  +2q^{\prime}\left(
x_{0}\right)  \right]  \cdot f^{\prime3}\left(  x_{0}\right)  .
\]

\end{remark}

\begin{proof}
We differentiate the equation $f^{\prime\prime}+pf^{\prime}+qf=0$ and we
obtain
\begin{align*}
f^{\prime\prime\prime}+pf^{\prime\prime}+\left(  p^{\prime}+q\right)
f^{\prime}+q^{\prime}f  & =0\\
f^{\prime\prime\prime\prime}+pf^{\prime\prime\prime}+\left(  2p^{\prime
}+q\right)  f^{\prime\prime}+\left(  p^{\prime\prime}+2q^{\prime}\right)
f^{\prime}+q^{\prime\prime}f  & =0.
\end{align*}
In the Wronski matrix we add to the first column the $p$ multiple of column 2
and the $q$ multiple of column 3 arriving at
\[
w=\det\left(
\begin{array}
[c]{ccc}%
f^{\prime\prime} & f^{\prime} & f\\
f^{\left(  3\right)  } & f^{\prime\prime} & f^{\prime}\\
f^{\left(  4\right)  } & f^{\left(  3\right)  } & f^{\prime\prime}%
\end{array}
\right)  =\det\left(
\begin{array}
[c]{ccc}%
f^{\prime\prime}+pf^{\prime}+qf & f^{\prime} & f\\
f^{\left(  3\right)  }+pf^{\prime\prime}+qf^{\prime} & f^{\prime\prime} &
f^{\prime}\\
f^{\left(  4\right)  }+pf^{\left(  3\right)  }+qf^{\prime\prime} & f^{\left(
3\right)  } & f^{\prime\prime}%
\end{array}
\right)  .
\]
Using the above differential expressions we see that%
\begin{equation}
w=\det\left(
\begin{array}
[c]{ccc}%
0 & f^{\prime} & f\\
-p^{\prime}f^{\prime}-q^{\prime}f & f^{\prime\prime} & f^{\prime}\\
-2p^{\prime}f^{\prime\prime}-\left(  p^{\prime\prime}+2q^{\prime}\right)
f^{\prime}-q^{\prime\prime}f & f^{\left(  3\right)  } & f^{\prime\prime}%
\end{array}
\right)  .\label{eqExpan}%
\end{equation}
Recall that
\[
v^{\prime}=\frac{d}{dx}\left(  f^{\prime}f^{\prime}-f^{\prime\prime}f\right)
=f^{\prime\prime}f^{\prime}-f^{\prime\prime\prime}f=\det\left(
\begin{array}
[c]{cc}%
f^{\prime} & f\\
f^{\left(  3\right)  } & f^{\prime\prime}%
\end{array}
\right)  .
\]
Expansion at the first column in (\ref{eqExpan}) gives
\[
w=\left(  p^{\prime}f^{\prime}+q^{\prime}f\right)  v^{\prime}-\left(
2p^{\prime}f^{\prime\prime}+\left(  p^{\prime\prime}+2q^{\prime}\right)
f^{\prime}+q^{\prime\prime}f\right)  v.
\]
By Proposition \ref{Prop1}, $v^{\prime}+pv=p^{\prime}f^{\prime}f+q^{\prime
}f^{2}=\left(  p^{\prime}f^{\prime}+q^{\prime}f\right)  f$, and we obtain
\[
w=\left(  p^{\prime}f^{\prime}+q^{\prime}f\right)  ^{2}f-\left(  2p^{\prime
}f^{\prime\prime}+\left(  p^{\prime\prime}+2q^{\prime}\right)  f^{\prime
}+q^{\prime\prime}f+\left(  p^{\prime}f^{\prime}+q^{\prime}f\right)  p\right)
v.
\]
Now replace $f^{\prime\prime}=-pf^{\prime}-qf$ and the result follows.
\end{proof}

\begin{corollary}
Let $f$ be a solution of the differential equation $f^{\prime\prime
}+pf^{\prime}+qf=0.$ Then
\[
w=a_{0}f^{3}+a_{1}f^{\prime}f^{2}+a_{2}f^{\prime2}f+a_{3}f^{\prime3}%
\]
with
\begin{align*}
a_{0}  & =2p^{\prime}q^{2}-pqq^{\prime}+(q^{\prime})^{2}-q^{\prime\prime}q,\\
a_{1}  & =3p^{\prime}pq-p^{\prime\prime}q-2qq^{\prime}-p^{2}q^{\prime
}+2p^{\prime}q^{\prime}-pq^{\prime\prime},\\
a_{2}  & =(p^{\prime})^{2}+p^{2}p^{\prime}-pp^{\prime\prime}+2p^{\prime
}q-3pq^{\prime}-q^{\prime\prime},\\
a_{3}  & =pp^{\prime}-p^{\prime\prime}-2q^{\prime}.
\end{align*}

\end{corollary}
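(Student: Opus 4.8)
The plan is to start from the representation
\[
w=\left(p'f'+q'f\right)^{2}f-A\cdot v
\]
established in Theorem \ref{ThmMain2}, together with the second formula for $A$, and to rewrite the right-hand side as a homogeneous cubic in the two quantities $f$ and $f'$. The key observation is that $v$ is itself a homogeneous quadratic in $f$ and $f'$: substituting $f''=-pf'-qf$ into $v=f'f'-f''f$ gives, exactly as in (\ref{eq1}) in the proof of Proposition \ref{Prop1},
\[
v=f'^{2}+pf'f+qf^{2}.
\]
Since $\left(p'f'+q'f\right)^{2}f$ is visibly a cubic in $(f,f')$ and, by the second formula, $A$ is linear in $(f,f')$, the product $A\cdot v$ is a cubic as well; hence $w$ is automatically of the claimed form $a_{0}f^{3}+a_{1}f'f^{2}+a_{2}f'^{2}f+a_{3}f'^{3}$, and only the coefficients remain to be identified.

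First I would expand
\[
\left(p'f'+q'f\right)^{2}f=q'^{2}f^{3}+2p'q'f'f^{2}+p'^{2}f'^{2}f.
\]
Then, writing $A=\alpha f'+\beta f$ with $\alpha=p''-p'p+2q'$ and $\beta=q''+pq'-2p'q$, multiplication by $v=f'^{2}+pf'f+qf^{2}$ yields
\[
A\cdot v=\alpha f'^{3}+\left(\alpha p+\beta\right)f'^{2}f+\left(\alpha q+\beta p\right)f'f^{2}+\beta q\,f^{3}.
\]
Subtracting this from the previous expansion and collecting the coefficients of $f^{3},f'f^{2},f'^{2}f,f'^{3}$ gives
\[
a_{3}=-\alpha,\quad a_{2}=p'^{2}-\left(\alpha p+\beta\right),\quad a_{1}=2p'q'-\left(\alpha q+\beta p\right),\quad a_{0}=q'^{2}-\beta q.
\]
Inserting the values of $\alpha$ and $\beta$ and simplifying then reproduces the four stated formulas.

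There is no genuine analytic difficulty here: the statement is a bookkeeping identity, and the only real risk is a sign slip when expanding $A\cdot v$ or when substituting $\alpha$ and $\beta$. What keeps the computation short is the recognition that $v$ is already a quadratic form in $(f,f')$, so that the differential equation is used only through the single substitution $f''=-pf'-qf$, and everything else is the algebra of homogeneous polynomials in the two symbols $f,f'$. As a consistency check I would verify $a_{3}=-\alpha=pp'-p''-2q'$ directly, and I would test the constant-coefficient case $p'=q'=0$: there $f'$ also solves the equation, so $f,f',f''$ are linearly dependent, $w$ vanishes identically, and indeed all four coefficients collapse to $0$.
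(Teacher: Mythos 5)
Your proposal is correct and follows essentially the same route as the paper: both start from the representation $w=(p'f'+q'f)^{2}f-A\cdot v$ of Theorem \ref{ThmMain2}, eliminate $f''$ via the differential equation so that $v=f'^{2}+pf'f+qf^{2}$ and $A$ become polynomials in $(f,f')$, and then collect coefficients of the cubic monomials. The only difference is cosmetic: the paper substitutes into the first formula for $A$ and delegates the expansion to Maple, whereas you use the second formula for $A$ and carry out the (correct) bookkeeping by hand, with a sensible sanity check in the constant-coefficient case.
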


\begin{proof}
Put $x=f$ and $f^{\prime}=y$ and write $v=qx^{2}+pxy+y^{2}.$ Then
\[
w=\left(  p^{\prime}y+q^{\prime}x\right)  ^{2}x-A\left(  qx^{2}+pxy+y^{2}%
\right)
\]
where
\[
A=\left(  2p^{\prime}\left(  -py-qx\right)  +\left(  p^{\prime\prime
}+2q^{\prime}\right)  y+q^{\prime\prime}x+\left(  p^{\prime}y+q^{\prime
}x\right)  p\right)
\]
Straightforward calculations (using e.g. Maple) gives the result.
\end{proof}

Now let $p=\left(  -2n\right)  /x$ and $q=1.$ Then
\begin{align*}
a_{0}  & =4nx^{-2}\text{ and }a_{1}=-4nx^{-3}\left(  3n-1\right) \\
a_{2}  & =4nx^{-2}\left(  2n^{2}x^{-2}-nx^{-2}+1\right)  \text{ and }%
a_{3}=-4nx^{-3}\left(  n-1\right)
\end{align*}
and we obtain:

\begin{corollary}
\label{Cor5}Let $f_{n}\left(  x\right)  $ be defined as in (\ref{eqdeffn}).
Then
\begin{equation}
w\left(  f_{n}\right)  =\frac{4n}{x^{2}}\left(  f_{n}^{3}-\frac{3n-1}{x}%
f_{n}^{\prime}f_{n}^{2}+\frac{2n^{2}-n+x^{2}}{x^{2}}f_{n}^{\prime2}f_{n}%
-\frac{n-1}{x}f_{n}^{\prime3}\right)  .\label{eqs4}%
\end{equation}

\end{corollary}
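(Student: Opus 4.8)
The plan is to apply the preceding Corollary, which expresses $w$ as the cubic form $a_0 f^3 + a_1 f' f^2 + a_2 f'^2 f + a_3 f'^3$ in $f$ and $f'$, directly to the function $f_n$. The essential observation is that $f_n$ satisfies the differential equation (\ref{eqdefnewf}), so it is a solution of $f'' + pf' + qf = 0$ with the explicit choice $p(x) = -2n/x$ and $q(x) = 1$. This is precisely the situation covered by that Corollary, and because $q$ is constant, every term involving $q'$ or $q''$ drops out, which trims the computation considerably.

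First I would record the derivatives of $p$ and $q$ that enter the four coefficient formulas: with $p = -2n/x$ one has $p' = 2n/x^2$ and $p'' = -4n/x^3$, while $q' = q'' = 0$. Next I would substitute these into the expressions for $a_0, a_1, a_2, a_3$ supplied by the Corollary. For instance $a_0 = 2p'q^2 - pqq' + (q')^2 - q''q$ collapses to $2p' = 4n/x^2$, and $a_3 = pp' - p'' - 2q'$ reduces to $pp' - p'' = (4n - 4n^2)/x^3 = -4nx^{-3}(n-1)$. The two middle coefficients are handled identically, giving $a_1 = -4nx^{-3}(3n-1)$ and $a_2 = 4nx^{-2}(2n^2 x^{-2} - nx^{-2} + 1)$, matching the values displayed just before the statement.

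Finally I would assemble $w(f_n) = a_0 f_n^3 + a_1 f_n' f_n^2 + a_2 f_n'^2 f_n + a_3 f_n'^3$ and factor the common prefactor $4n/x^2$ out of all four coefficients. After extracting $4n/x^2$, the coefficient of $f_n'^2 f_n$ becomes $2n^2 x^{-2} - nx^{-2} + 1$, which I would rewrite over the common denominator $x^2$ as $(2n^2 - n + x^2)/x^2$ to reach the form in (\ref{eqs4}). There is no genuine obstacle here: the argument is a specialization of the already-proven Corollary followed by elementary simplification, the only point demanding care being the bookkeeping of the powers of $x$ when the terms are combined.
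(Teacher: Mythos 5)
Your proposal is correct and follows exactly the paper's own route: the paper obtains Corollary \ref{Cor5} precisely by specializing the preceding corollary (the cubic form $w=a_{0}f^{3}+a_{1}f^{\prime}f^{2}+a_{2}f^{\prime2}f+a_{3}f^{\prime3}$) to $p=-2n/x$, $q=1$, computing the four coefficients, and factoring out $4n/x^{2}$. Your coefficient evaluations all match the paper's displayed values, so there is nothing to add.
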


Let us recall that $j_{\nu,k}$ and $j_{\nu,k}^{\prime}$ respectively denotes
the $k$-th positive zero of $J_{\nu}$ and $J_{\nu}^{\prime}$ respectively. It
is well known (see \cite[p. 479]{Watson}) that the zeros of $J_{\nu}$ and
$J_{\nu+1}$ interlace, so we have
\begin{equation}
0<j_{\nu,1}<j_{\nu+1,1}<j_{\nu,2}.\label{eqjj}%
\end{equation}
Now we consider the zeros of $f_{n}$ and $f_{n}^{\prime}$. We define
$j_{k}\left(  f_{n}\right)  $ as the $k$-th positive zero of $f,$ and
$j_{k}\left(  f_{n}^{\prime}\right)  $ as the $k$-th positive zero of
$f_{n}^{\prime}.$ From the definition (\ref{eqdeffn}) it is clear that
$j_{k}\left(  f_{n}\right)  =j_{n+\frac{1}{2},k}$. Proposition 3 in
\cite[Proposition 3]{CMP23} says that $f_{n}^{\prime}\left(  x\right)
=xf_{n-1}\left(  x\right)  $ which shows that $j_{k}\left(  f_{n}^{\prime
}\right)  =j_{n-\frac{1}{2},k}.$ Thus with $\nu=n-\frac{1}{2}$ we obtain from
(\ref{eqjj}) that
\[
0<j_{1}\left(  f_{n}^{\prime}\right)  <j_{1}\left(  f_{n}\right)
<j_{2}\left(  f_{n}^{\prime}\right)
\]
so $f\,_{n}^{\prime}\left(  x\right)  <0$ for all $x\in\left(  j_{1}\left(
f_{n}^{\prime}\right)  ,j_{1}\left(  f_{n}\right)  \right)  .$ Since
$J_{n+\frac{1}{2}}$ is increasing and positive on $\left(  0,j_{n+\frac{1}%
{2},1}^{\prime}\right)  $ we see that $f_{n}$ defined in (\ref{eqdeffn}) is,
as the product two increasing positive functions, increasing and positive on
$\left(  0,j_{n+\frac{1}{2},1}^{\prime}\right)  .$ Thus we obtain%
\begin{equation}
j_{1}\left(  f_{n}^{\prime}\right)  \geq j_{n+\frac{1}{2},1}^{\prime}%
>n+\frac{1}{2}\label{eqnnn}%
\end{equation}
where we used that $j_{\nu}^{\prime}>\nu$, see \cite[p. 485]{Watson}.

\begin{corollary}
\label{Cor4}The function $w\left(  f_{n}\right)  $ is positive on the interval
$\left(  j_{1}\left(  f_{n}^{\prime}\right)  ,\,j_{1}\left(  f_{n}\right)
\right)  .$
\end{corollary}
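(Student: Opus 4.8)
The plan is to read positivity directly off the explicit expansion of $w\left(f_{n}\right)$ furnished by Corollary \ref{Cor5}, so that the statement reduces to a sign count for four monomials in $f_{n}$ and $f_{n}^{\prime}$. Recall that Corollary \ref{Cor5} gives
\[
w\left(f_{n}\right)=\frac{4n}{x^{2}}\left(f_{n}^{3}-\frac{3n-1}{x}f_{n}^{\prime}f_{n}^{2}+\frac{2n^{2}-n+x^{2}}{x^{2}}f_{n}^{\prime2}f_{n}-\frac{n-1}{x}f_{n}^{\prime3}\right).
\]
Since $4n/x^{2}>0$ for $n\geq1$ and $x>0$, it suffices to prove that the bracketed expression is strictly positive on $\left(j_{1}\left(f_{n}^{\prime}\right),j_{1}\left(f_{n}\right)\right)$.

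First I would record the signs of $f_{n}$ and $f_{n}^{\prime}$ on this interval. By definition $j_{1}\left(f_{n}\right)$ is the first positive zero of $f_{n}$, and $f_{n}$ is positive near the origin, so $f_{n}>0$ on all of $\left(0,j_{1}\left(f_{n}\right)\right)$ and in particular on the interval in question. For the derivative, the interlacing $0<j_{1}\left(f_{n}^{\prime}\right)<j_{1}\left(f_{n}\right)<j_{2}\left(f_{n}^{\prime}\right)$ established above shows that $f_{n}^{\prime}$ keeps a constant sign between its first two positive zeros; as observed above this sign is negative, so $f_{n}^{\prime}<0$ on $\left(j_{1}\left(f_{n}^{\prime}\right),j_{1}\left(f_{n}\right)\right)$.

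With $f_{n}>0$, $f_{n}^{\prime}<0$, $x>0$ and $n\geq1$ in hand, I would then examine the four summands individually. The coefficients $3n-1$, $2n^{2}-n+x^{2}=n\left(2n-1\right)+x^{2}$ and $n-1$ are all nonnegative for $n\geq1$ and $x>0$. Consequently $f_{n}^{3}>0$; the term $-\frac{3n-1}{x}f_{n}^{\prime}f_{n}^{2}$ is positive since $f_{n}^{\prime}<0$; the term $\frac{2n^{2}-n+x^{2}}{x^{2}}f_{n}^{\prime2}f_{n}$ is positive since $f_{n}^{\prime2}>0$ and $f_{n}>0$; and $-\frac{n-1}{x}f_{n}^{\prime3}$ is nonnegative since $f_{n}^{\prime3}<0$ and $n-1\geq0$. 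Hence the bracket is a sum of nonnegative terms whose leading term $f_{n}^{3}$ is strictly positive, and $w\left(f_{n}\right)>0$ follows.

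There is no genuine analytic obstacle once Corollary \ref{Cor5} and the zero-interlacing are available; the whole argument is a bookkeeping of signs. The only point deserving a moment's care is the last coefficient $n-1$, which vanishes at $n=1$ and is positive for $n\geq2$ — in both cases the sign comes out right — together with the observation that strict positivity is already secured by the single term $f_{n}^{3}$, so the conclusion is insensitive to the behaviour of the $f_{n}^{\prime}$-dependent terms near the endpoints.
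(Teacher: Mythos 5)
Your proposal is correct and follows essentially the same route as the paper: both read the positivity directly off the explicit expansion in Corollary \ref{Cor5}, using $f_{n}>0$ on $\left(0,j_{1}\left(f_{n}\right)\right)$ and $f_{n}^{\prime}<0$ on $\left(j_{1}\left(f_{n}^{\prime}\right),j_{1}\left(f_{n}\right)\right)$ from the interlacing of Bessel zeros. Your write-up is in fact slightly more careful than the paper's one-line proof, since you note that the term with coefficient $n-1$ is only nonnegative (it vanishes for $n=1$) and that strict positivity is carried by the $f_{n}^{3}$ term.
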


\begin{proof}
Clearly $f_{n}\left(  x\right)  >0$ for all $x\in(0,j_{n+\frac{1}{2},1})$ and
$f_{n}^{\prime}\left(  x\right)  $ is negative for $x\in(j_{n-\frac{1}{2}%
,1},j_{n+\frac{1}{2},1}).$ Hence the summands in expression in (\ref{eqs4})
are positive.
\end{proof}

\section{The derivative of $w\left(  f\right)  $}

In Section 2 we established the useful formula $v^{\prime}+pv=p^{\prime
}f^{\prime}f+q^{\prime}f^{2}.$ For the function $w$ we have
\begin{equation}
w=\left(  p^{\prime}f^{\prime}+q^{\prime}f\right)  ^{2}f-A\cdot
v\label{eqidww}%
\end{equation}
where%
\begin{equation}
A=\left(  p^{\prime\prime}-p^{\prime}p+2q^{\prime}\right)  f^{\prime}+\left(
-2p^{\prime}q+q^{\prime\prime}+pq^{\prime}\right)  f.\label{eqdefA2}%
\end{equation}
It is a natural question whether there a nice formula for $w^{\prime}+pw?$ We
have the following result:

\begin{theorem}
\label{ThmMain3}The following formula holds:
\begin{equation}
w^{\prime}+pw=\left(  p^{\prime}f^{\prime}+q^{\prime}f\right)  \cdot\left(
\left(  p^{\prime\prime}+q^{\prime}\right)  f^{\prime}f+q^{\prime\prime}%
f^{2}+p^{\prime}f^{\prime}f^{\prime}\right)  -A^{\prime}\cdot
v.\label{eqidwprime}%
\end{equation}
In the case $q^{\prime}=0$ this simplifies to
\[
w^{\prime}+pw=p^{\prime}f^{\prime2}\left(  p^{\prime\prime}f+p^{\prime
}f^{\prime}\right)  -A^{\prime}v.
\]

\end{theorem}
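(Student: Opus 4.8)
The plan is to differentiate the expression $w=\left(p^{\prime}f^{\prime}+q^{\prime}f\right)^{2}f-A\cdot v$ supplied by Theorem \ref{ThmMain2} and to feed in the first-order identity $v^{\prime}+pv=p^{\prime}f^{\prime}f+q^{\prime}f^{2}$ from Proposition \ref{Prop1}. To keep the bookkeeping manageable I would abbreviate $B:=p^{\prime}f^{\prime}+q^{\prime}f$, so that $w=B^{2}f-Av$ and, crucially, Proposition \ref{Prop1} takes the compact form $v^{\prime}+pv=Bf$.

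Differentiating gives $w^{\prime}=2BB^{\prime}f+B^{2}f^{\prime}-A^{\prime}v-Av^{\prime}$, and hence
\[
w^{\prime}+pw=B^{2}\left(f^{\prime}+pf\right)+2BB^{\prime}f-A^{\prime}v-A\left(v^{\prime}+pv\right).
\]
Substituting $v^{\prime}+pv=Bf$ collapses the last term to $-ABf$ and lets me factor $B$ out of everything except the $-A^{\prime}v$ piece:
\[
w^{\prime}+pw=B\bigl[B\left(f^{\prime}+pf\right)+2B^{\prime}f-Af\bigr]-A^{\prime}v.
\]
It then remains only to identify the bracketed factor with $\left(p^{\prime\prime}+q^{\prime}\right)f^{\prime}f+q^{\prime\prime}f^{2}+p^{\prime}f^{\prime}f^{\prime}$.

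The evaluation of this bracket is the heart of the argument. Expanding $B\left(f^{\prime}+pf\right)$, then $2B^{\prime}f$ with $B^{\prime}=p^{\prime\prime}f^{\prime}+p^{\prime}f^{\prime\prime}+q^{\prime\prime}f+q^{\prime}f^{\prime}$, and finally $-Af$ using the second form of $A$ in Theorem \ref{ThmMain2}, produces a polynomial in $f,f^{\prime},f^{\prime\prime}$. The decisive step is to eliminate $f^{\prime\prime}$ through the differential equation $f^{\prime\prime}=-pf^{\prime}-qf$; in particular the term $2p^{\prime}f^{\prime\prime}f$ arising from $2B^{\prime}f$ becomes $-2pp^{\prime}f^{\prime}f-2p^{\prime}qf^{2}$. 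Collecting coefficients, the $f^{\prime2}$ term is immediately $p^{\prime}$; the $f^{\prime}f$ terms combine to $p^{\prime\prime}+q^{\prime}$ precisely because the $2pp^{\prime}f^{\prime}f$ introduced by the $f^{\prime\prime}$ substitution cancels the remaining $pp^{\prime}$ contributions from $B\left(f^{\prime}+pf\right)$ and $-Af$; and the $f^{2}$ terms reduce to $q^{\prime\prime}$, the $\pm2p^{\prime}q$ and $\pm pq^{\prime}$ contributions annihilating one another. This yields the bracket in the asserted form and therefore the first identity. The only obstacle here is organizational rather than conceptual: one must track the several $pp^{\prime}f^{\prime}f$ and $p^{\prime}qf^{2}$ contributions carefully so that they cancel in pairs.

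The case $q^{\prime}=0$ then follows by direct specialization. Here $B=p^{\prime}f^{\prime}$ and $q^{\prime\prime}=0$, so the bracket reduces to $p^{\prime\prime}f^{\prime}f+p^{\prime}f^{\prime}f^{\prime}=f^{\prime}\left(p^{\prime\prime}f+p^{\prime}f^{\prime}\right)$, and the identity becomes $w^{\prime}+pw=p^{\prime}f^{\prime2}\left(p^{\prime\prime}f+p^{\prime}f^{\prime}\right)-A^{\prime}v$, as claimed.
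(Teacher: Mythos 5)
Your proof is correct and follows essentially the same route as the paper's: differentiate the identity $w=\left(p^{\prime}f^{\prime}+q^{\prime}f\right)^{2}f-Av$ from Theorem \ref{ThmMain2}, invoke $v^{\prime}+pv=\left(p^{\prime}f^{\prime}+q^{\prime}f\right)f$ from Proposition \ref{Prop1}, and eliminate $f^{\prime\prime}$ via the differential equation before collecting coefficients. The only difference is organizational — you factor $B=p^{\prime}f^{\prime}+q^{\prime}f$ out of everything before expanding, whereas the paper expands $\frac{d}{dx}\left[B^{2}f\right]$ and the remaining subsum separately and then adds coefficients — and your stated cancellations (the $-2pp^{\prime}f^{\prime}f$ from the $f^{\prime\prime}$ substitution against the two $pp^{\prime}f^{\prime}f$ terms, and the $\pm2p^{\prime}q$, $\pm pq^{\prime}$ pairs in the $f^{2}$ coefficient) all check out.
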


\begin{proof}
At first we differentiate (\ref{eqidww}) and we obtain
\[
w^{\prime}+pw=\frac{d}{dx}\left[  \left(  p^{\prime}f^{\prime}+q^{\prime
}f\right)  ^{2}f\right]  -A^{\prime}v-Av^{\prime}+\left(  p^{\prime}f^{\prime
}+q^{\prime}f\right)  ^{2}pf-A\cdot pv.
\]
Next we compute the subsum
\[
S:=-Av^{\prime}+\left(  p^{\prime}f^{\prime}+q^{\prime}f\right)  ^{2}pf-A\cdot
pv=-A\left(  p^{\prime}f^{\prime}+q^{\prime}f\right)  f+\left(  p^{\prime
}f^{\prime}+q^{\prime}f\right)  ^{2}pf
\]
where we used that $v^{\prime}+pv=\left(  p^{\prime}f^{\prime}+q^{\prime
}f\right)  \cdot f$, see (\ref{eq1}). Using (\ref{eqdefA2}) we conclude that
\begin{align*}
S  & =-\left(  p^{\prime}f^{\prime}+q^{\prime}f\right)  \cdot f\cdot\left(
A-\left(  p^{\prime}pf^{\prime}+pq^{\prime}f\right)  \right) \\
& =-\left(  p^{\prime}f^{\prime}+q^{\prime}f\right)  \cdot f\cdot\left(
\left(  p^{\prime\prime}-p^{\prime}p+2q^{\prime}\right)  f^{\prime}+\left(
-2p^{\prime}q+q^{\prime\prime}+pq^{\prime}\right)  f-\left(  p^{\prime
}pf^{\prime}+pq^{\prime}f\right)  \right) \\
& =-\left(  p^{\prime}f^{\prime}+q^{\prime}f\right)  \cdot\left[  \left(
p^{\prime\prime}-2p^{\prime}p+2q^{\prime}\right)  f^{\prime}f+\left(
-2p^{\prime}q+q^{\prime\prime}\right)  f^{2}\right]  .
\end{align*}
Further
\begin{align*}
& \frac{d}{dx}\left[  \left(  p^{\prime}f^{\prime}+q^{\prime}f\right)
^{2}f\right] \\
& =2\left(  p^{\prime}f^{\prime}+q^{\prime}f\right)  \left(  p^{\prime\prime
}f^{\prime}+p^{\prime}f^{\prime\prime}+q^{\prime\prime}f+q^{\prime}f^{\prime
}\right)  f+\left(  p^{\prime}f^{\prime}+q^{\prime}f\right)  ^{2}f^{\prime}\\
& =\left(  p^{\prime}f^{\prime}+q^{\prime}f\right)  \cdot\left(
2p^{\prime\prime}f^{\prime}f+2p^{\prime}f^{\prime\prime}f+2q^{\prime\prime
}ff+2q^{\prime}f^{\prime}f+p^{\prime}f^{\prime}f^{\prime}+q^{\prime}f^{\prime
}f\right) \\
& =\left(  p^{\prime}f^{\prime}+q^{\prime}f\right)  \cdot\left(  \left(
2p^{\prime\prime}+3q^{\prime}\right)  f^{\prime}f+2p^{\prime}f^{\prime\prime
}f+2q^{\prime\prime}ff+p^{\prime}f^{\prime}f^{\prime}\right) \\
& =\left(  p^{\prime}f^{\prime}+q^{\prime}f\right)  \cdot\left(  \left(
2p^{\prime\prime}+3q^{\prime}\right)  f^{\prime}f-2p^{\prime}pf^{\prime
}f-2p^{\prime}qf^{2}+2q^{\prime\prime}ff+p^{\prime}f^{\prime}f^{\prime}\right)
\\
& =\left(  p^{\prime}f^{\prime}+q^{\prime}f\right)  \cdot\left[  \left(
2p^{\prime\prime}-2p^{\prime}p+3q^{\prime}\right)  f^{\prime}f+\left(
2q^{\prime\prime}-2p^{\prime}q\right)  ff+p^{\prime}f^{\prime}f^{\prime
}\right]  .
\end{align*}
Note that
\[
w^{\prime}+pw=\frac{d}{dx}\left[  \left(  p^{\prime}f^{\prime}+q^{\prime
}f\right)  ^{2}f\right]  +S-A^{\prime}v.
\]
We can factor out $\left(  p^{\prime}f^{\prime}+q^{\prime}f\right)  $ in the
first two summands. It follows that
\[
\frac{d}{dx}\left[  \left(  p^{\prime}f^{\prime}+q^{\prime}f\right)
^{2}f\right]  +S=\left(  p^{\prime}f^{\prime}+q^{\prime}f\right)  \cdot\left(
\left(  p^{\prime\prime}+q^{\prime}\right)  f^{\prime}f+q^{\prime\prime}%
f^{2}+p^{\prime}f^{\prime}f^{\prime}\right)  .
\]
using that
\begin{align*}
\left(  2p^{\prime\prime}-2p^{\prime}p+3q^{\prime}\right)  -\left(
p^{\prime\prime}-2p^{\prime}p+2q^{\prime}\right)   & =p^{\prime\prime
}+q^{\prime}\\
\left(  2q^{\prime\prime}-2p^{\prime}q\right)  -\left(  -2p^{\prime
}q+q^{\prime\prime}\right)   & =q^{\prime\prime}.
\end{align*}
The proof is complete.
\end{proof}

The following is a key result in the paper:\ 

\begin{theorem}
\label{ThmMain4}Assume that $q^{\prime}=0.$ Then the following identity
\[
w^{\prime}+\frac{3}{2}\left(  p-\frac{p^{\prime\prime}}{p^{\prime}}\right)
w=p^{\prime}\cdot f^{\prime}\cdot V
\]
holds where
\[
V:=\left(  \frac{1}{2}\left(  p^{\prime}p-p^{\prime\prime}\right)
f+p^{\prime}f^{\prime}\right)  f^{\prime}-\left(  \frac{1}{2}p^{2}-p^{\prime
}-2q+\frac{p^{\prime\prime\prime}}{p^{\prime}}-\frac{3}{2}\frac{(p^{\prime
\prime})^{2}}{p^{\prime2}}\right)  v.
\]

\end{theorem}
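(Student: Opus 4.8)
The plan is to start from the $q'=0$ specialization of Theorem \ref{ThmMain3}, namely $w'+pw=p'f'^{2}\left(p''f+p'f'\right)-A'v$, and to convert the combination $w'+pw$ into the desired combination $w'+\frac{3}{2}\left(p-\frac{p''}{p'}\right)w$ by adding a correction term. Writing
\[
w'+\frac{3}{2}\left(p-\frac{p''}{p'}\right)w=\left(w'+pw\right)+\left(\frac{1}{2}p-\frac{3}{2}\frac{p''}{p'}\right)w,
\]
I would substitute for $w$ the closed form supplied by Theorem \ref{ThmMain2} in the case $q'=0$, that is $w=p'^{2}f'^{2}f-Av$ with $A=\left(p''-p'p\right)f'-2p'qf$. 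This reduces the whole expression to something linear in $v$, whose coefficients are polynomials in $f,f'$ and in $p,q$ and their derivatives, together with a ``$v$-free'' cubic part in $f,f'$.

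Next I would compute $A'$ explicitly. Differentiating $A$ and using $q'=0$ produces a term $\left(p''-p'p\right)f''$, which I would eliminate by the substitution $f''=-pf'-qf$ coming from the differential equation; this gives $A'=\left(p'''-2p''p-p'^{2}+p'p^{2}-2p'q\right)f'+\left(p'pq-3p''q\right)f$. With $w$, $A$ and $A'$ in hand, I would collect the $v$-free terms and the $v$-terms separately. The $v$-free terms, after including the correction $\left(\frac{1}{2}p-\frac{3}{2}\frac{p''}{p'}\right)p'^{2}f'^{2}f$, should collapse to $p'f'^{2}\left(\frac{1}{2}\left(p'p-p''\right)f+p'f'\right)$, which is precisely the $v$-free part of $p'f'V$.

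The crux of the calculation is the coefficient of $v$, which equals $-A'-\left(\frac{1}{2}p-\frac{3}{2}\frac{p''}{p'}\right)A$. A priori this is a combination of both $f$ and $f'$, but the essential point is that the $f$-terms cancel identically: the contribution $\left(p'pq-3p''q\right)f$ from $A'$ is exactly offset by the $f$-part of $\left(\frac{1}{2}p-\frac{3}{2}\frac{p''}{p'}\right)A$, so only a multiple of $f'$ survives. What remains is $\left(-p'''+p'^{2}-\frac{1}{2}p'p^{2}+2p'q+\frac{3}{2}\frac{p''^{2}}{p'}\right)f'$, and factoring out $p'f'$ rewrites this as $-p'f'\left(\frac{1}{2}p^{2}-p'-2q+\frac{p'''}{p'}-\frac{3}{2}\frac{p''^{2}}{p'^{2}}\right)$, matching the $v$-coefficient in $p'f'V$. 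Combining the two parts yields the claimed identity. The main obstacle is exactly verifying this cancellation of the $f$-terms amid the bookkeeping of the numerous $p$-derivative contributions; throughout one must assume $p'\neq0$ so that division by $p'$ is legitimate, as in Proposition \ref{Prop2}.
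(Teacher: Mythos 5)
Your proposal is correct and follows essentially the same route as the paper's own proof: decomposing $w'+\frac{3}{2}\left(p-\frac{p''}{p'}\right)w$ as $(w'+pw)+\left(\frac{1}{2}p-\frac{3}{2}\frac{p''}{p'}\right)w$, invoking Theorems \ref{ThmMain3} and \ref{ThmMain2} in the $q'=0$ case, and hinging on the identical cancellation of the $f$-terms in $A'+\left(\frac{1}{2}p-\frac{3}{2}\frac{p''}{p'}\right)A$ (the paper's "crucial point"). Your explicit expressions for $A'$ and for the surviving $f'$-coefficient agree with the paper's computation, so the argument is sound as written.
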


\begin{proof}
We use the formula (\ref{eqidwprime}) for $w^{\prime}+pw$ and (\ref{eqidww})
for the case $q^{\prime}=0$ to compute
\begin{align*}
& w^{\prime}+pw+\left(  \frac{1}{2}p-\frac{3}{2}\frac{p^{\prime\prime}%
}{p^{\prime}}\right)  w\\
& =p^{\prime}f^{\prime2}\left(  p^{\prime\prime}f+p^{\prime}f^{\prime}\right)
-A^{\prime}v+\left(  \frac{1}{2}p-\frac{3}{2}\frac{p^{\prime\prime}}%
{p^{\prime}}\right)  \left(  p^{\prime}f^{\prime}\right)  ^{2}f-\left(
\frac{1}{2}p-\frac{3}{2}\frac{p^{\prime\prime}}{p^{\prime}}\right)  Av.
\end{align*}
Note that we simplify the sum of the first and third summand by factoring our
$p^{\prime}f^{\prime2}$ and computing
\[
\left(  p^{\prime\prime}f+p^{\prime}f^{\prime}\right)  +\left(  \frac{1}%
{2}p-\frac{3}{2}\frac{p^{\prime\prime}}{p^{\prime}}\right)  p^{\prime}%
f=\frac{1}{2}\left(  p^{\prime}p-p^{\prime\prime}\right)  f+p^{\prime
}f^{\prime}\
\]
leading to the first summand in $V.$ Since $A=\left(  p^{\prime\prime
}-p^{\prime}p\right)  f^{\prime}-2p^{\prime}qf$ and $f^{\prime\prime
}=-pf^{\prime}-qf$ we obtain that%
\[
A^{\prime}=\left(  p^{\prime\prime\prime}-p^{\prime}p^{\prime}-p^{\prime
\prime}p\right)  f^{\prime}+\left(  p^{\prime\prime}-p^{\prime}p\right)
\left(  -pf^{\prime}-qf\right)  -2p^{\prime\prime}qf-2p^{\prime}qf^{\prime}.
\]
We conclude that
\begin{align*}
& A^{\prime}+\left(  \frac{1}{2}p-\frac{3}{2}\frac{p^{\prime\prime}}%
{p^{\prime}}\right)  A\\
& =\left(  p^{\prime\prime\prime}-p^{\prime}p^{\prime}-p^{\prime\prime
}p\right)  f^{\prime}+\left(  p^{\prime\prime}-p^{\prime}p\right)  \left(
-pf^{\prime}\right)  -2p^{\prime}qf^{\prime}+\left(  \frac{1}{2}p-\frac{3}%
{2}\frac{p^{\prime\prime}}{p^{\prime}}\right)  \left(  p^{\prime\prime
}-p^{\prime}p\right)  f^{\prime}\\
& +\left(  p^{\prime\prime}-p^{\prime}p\right)  \left(  -qf\right)
-2p^{\prime\prime}qf-\left(  \frac{1}{2}p-\frac{3}{2}\frac{p^{\prime\prime}%
}{p^{\prime}}\right)  2p^{\prime}qf.
\end{align*}
The crucial point is the fact that
\[
\left(  p^{\prime\prime}-p^{\prime}p\right)  \left(  -q\right)  -2p^{\prime
\prime}q-\left(  \frac{1}{2}p-\frac{3}{2}\frac{p^{\prime\prime}}{p^{\prime}%
}\right)  2p^{\prime}q=0.
\]
Further we can simplify the coefficient for $f^{\prime}$ in $A^{\prime
}+\left(  \frac{1}{2}p-\frac{3}{2}\frac{p^{\prime\prime}}{p^{\prime}}\right)
$ by observing that
\begin{align*}
& \left(  p^{\prime\prime\prime}-p^{\prime}p^{\prime}-p^{\prime\prime
}p\right)  +\left(  p^{\prime\prime}-p^{\prime}p\right)  \left(  -p\right)
-2p^{\prime}q+\left(  \frac{1}{2}p-\frac{3}{2}\frac{p^{\prime\prime}%
}{p^{\prime}}\right)  \left(  p^{\prime\prime}-p^{\prime}p\right) \\
& =p^{\prime\prime\prime}-p^{\prime}p^{\prime}-p^{\prime\prime}p-\frac{1}%
{2}p^{\prime\prime}p+p^{\prime}p^{2}-2p^{\prime}q-\frac{3}{2}\frac
{p^{\prime\prime2}}{p^{\prime}}+\frac{3}{2}p^{\prime\prime}p\\
& =p^{\prime\prime\prime}-p^{\prime}p^{\prime}+\frac{1}{2}p^{\prime}%
p^{2}-2p^{\prime}q-\frac{3}{2}\frac{p^{\prime\prime2}}{p^{\prime}}.
\end{align*}
Thus we have
\[
A^{\prime}+\left(  \frac{1}{2}p-\frac{3}{2}\frac{p^{\prime\prime}}{p^{\prime}%
}\right)  A=f^{\prime}\left(  \frac{1}{2}p^{2}p^{\prime}-(p^{\prime}%
)^{2}-2p^{\prime}q+p^{\prime\prime\prime}-\frac{3}{2}\frac{(p^{\prime\prime
})^{2}}{p^{\prime}}\right)  .
\]
The proof is completed by dividing the last expression by $p^{\prime}.$
\end{proof}

\section{Positivity of the function $V\left(  f\right)  $}

Assume that $q^{\prime}=0.$ Then Theorem \ref{ThmMain4} shows that
\begin{equation}
w^{\prime}+\frac{3}{2}\left(  p-\frac{p^{\prime\prime}}{p^{\prime}}\right)
w=p^{\prime}\cdot f^{\prime}\cdot V\label{eqwagain}%
\end{equation}
where
\begin{align}
V  & =p^{\prime}f^{\prime2}+\frac{1}{2}\left(  p^{\prime}p-p^{\prime\prime
}\right)  f^{\prime}f-Bv\label{eqVagain}\\
B  & =\frac{1}{2}p^{2}-p^{\prime}-2q+\frac{p^{\prime\prime\prime}}{p^{\prime}%
}-\frac{3}{2}\frac{(p^{\prime\prime})^{2}}{\left(  p^{\prime}\right)  ^{2}%
}.\label{eqBagain}%
\end{align}
In the following we want to discuss the positivity of $V.$ More general, we
consider functions $F$ of the form
\[
F=a_{1}f^{\prime2}+a_{2}f^{\prime}f+a_{3}v.
\]
Setting $a_{1}=p^{\prime}$ and $a_{2}=\frac{1}{2}\left(  p^{\prime}%
p-p^{\prime\prime}\right)  $ and $a_{3}=-B$ we obtain the function $V.$

\begin{proposition}
\label{ThmMain6}Assume that $q^{\prime}=0$ and let $f$ be a solution of
$f^{\prime\prime}+pf^{\prime}+qf=0$, and define $P=\int p\left(  x\right)
dx.$ Let $a_{1},a_{2},a_{3}\in C^{1}\left(  a,b\right)  $ and define
$F=a_{1}f^{\prime2}+a_{2}f^{\prime}f+a_{3}v.$ Then%
\[
e^{-P\left(  x\right)  }F^{\prime}\left(  x\right)  =A_{1}f^{\prime2}%
+A_{2}f^{\prime}f+A_{3}v
\]
where is equal to
\begin{align*}
A_{1}  & =a_{1}^{\prime}-a_{1}p+2a_{2}\\
A_{2}  & =a_{2}^{\prime}-2a_{1}q+pa_{2}+a_{3}p^{\prime}\\
A_{3}  & =a_{3}^{\prime}-a_{2}.
\end{align*}

\end{proposition}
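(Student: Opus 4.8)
Following the device used repeatedly in Section~2 (see the proof of Proposition~\ref{Prop1} and equation~(\ref{eqVV})), I would read the left-hand side as the combination $e^{-P}\frac{d}{dx}\!\left(e^{P}F\right)=F^{\prime}+pF$, using $P^{\prime}=p$, and compute this quantity directly. The whole argument is then a single differentiation followed by two substitutions, so the only real work is keeping the bookkeeping straight.

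First I would differentiate $F=a_{1}f^{\prime 2}+a_{2}f^{\prime}f+a_{3}v$ by the product rule:
\[
F^{\prime}=a_{1}^{\prime}f^{\prime 2}+2a_{1}f^{\prime}f^{\prime\prime}+a_{2}^{\prime}f^{\prime}f+a_{2}\left(f^{\prime\prime}f+f^{\prime 2}\right)+a_{3}^{\prime}v+a_{3}v^{\prime}.
\]
Next I would eliminate every second and higher derivative together with the term $v^{\prime}$: substitute $f^{\prime\prime}=-pf^{\prime}-qf$ from the differential equation, and substitute $v^{\prime}=-pv+p^{\prime}f^{\prime}f$, which is Proposition~\ref{Prop1} specialized to $q^{\prime}=0$. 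After these replacements $F^{\prime}+pF$ is a linear combination of the four quantities $f^{\prime 2}$, $f^{\prime}f$, $f^{2}$ and $v$.

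The one step that is not purely mechanical is removing the stray $f^{2}$ term, which enters with coefficient $-a_{2}q$ through the product $a_{2}f^{\prime\prime}f$. Here I would invoke the identity $v=f^{\prime 2}+pf^{\prime}f+qf^{2}$ from~(\ref{eq1}), rewritten as $qf^{2}=v-f^{\prime 2}-pf^{\prime}f$; substituting this redistributes the $f^{2}$ contribution among the coefficients of $f^{\prime 2}$, $f^{\prime}f$ and $v$, and is precisely what recovers the desired three-term form. Collecting coefficients then yields $A_{1}=a_{1}^{\prime}-a_{1}p+2a_{2}$, $A_{2}=a_{2}^{\prime}-2a_{1}q+pa_{2}+a_{3}p^{\prime}$ and $A_{3}=a_{3}^{\prime}-a_{2}$.

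The main obstacle is clerical rather than conceptual: one must keep the four monomials separate and apply the $qf^{2}$ substitution at the right moment. A convenient sanity check is the coefficient of $v$, where the $-a_{3}p$ from differentiating $v$ is exactly cancelled by the $+pa_{3}$ coming from $pF$, leaving $a_{3}^{\prime}$, so that the $f^{2}$ elimination contributes only the final $-a_{2}$ and produces the strikingly simple $A_{3}=a_{3}^{\prime}-a_{2}$; the coefficients of $f^{\prime 2}$ and $f^{\prime}f$ can then be verified independently against $A_{1}$ and $A_{2}$.
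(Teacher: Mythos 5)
Your proposal is correct and follows essentially the same route as the paper: read the left-hand side as $F'+pF=e^{-P}\frac{d}{dx}\left(e^{P}F\right)$, differentiate $F$, use the differential equation together with Proposition~\ref{Prop1} (specialized to $q'=0$) to stay inside the span of $f'^2$, $f'f$, $v$, and collect coefficients. The only cosmetic difference is that the paper disposes of the term $a_{2}f''f$ directly via the definition $f''f=f'^2-v$ (so no $f^{2}$ term ever appears), whereas you substitute $f''=-pf'-qf$ and then remove the resulting $qf^{2}$ term via (\ref{eq1}); since (\ref{eq1}) is exactly the definition of $v$ combined with the ODE, the two bookkeepings are algebraically identical.
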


\begin{proof}
We have
\[
F^{\prime}=a_{1}^{\prime}f^{\prime2}+a_{2}^{\prime}f^{\prime}f+a_{3}^{\prime
}v+2a_{1}f^{\prime}f^{\prime\prime}+a_{2}\left(  f^{\prime}f^{\prime
}+f^{\prime\prime}f\right)  +a_{3}v^{\prime}.
\]
Note that $f^{\prime}f^{\prime\prime}=-pf^{\prime}f^{\prime}-qf^{\prime}f$ and
$f^{\prime}f^{\prime}+f^{\prime\prime}f=2f^{\prime}f^{\prime}-v $. Hence
\[
F^{\prime}=\left(  a_{1}^{\prime}-2a_{1}p+2a_{2}\right)  f^{\prime2}+\left(
a_{2}^{\prime}-2a_{1}q\right)  f^{\prime}f+a_{3}^{\prime}v-a_{2}%
v+a_{3}v^{\prime}.
\]
Now we want to compute $F^{\prime}+pF$ and observe that $a_{3}v^{\prime
}+pa_{3}v=a_{3}\left(  v+pv\right)  =a_{3}p^{\prime}f^{\prime}f.$ It follows
that
\[
F^{\prime}+pF=\left(  a_{1}^{\prime}-a_{1}p+2a_{2}\right)  f^{\prime2}+\left(
a_{2}^{\prime}-2a_{1}q+pa_{2}+a_{3}p^{\prime}\right)  f^{\prime}f+\left(
a_{3}^{\prime}-a_{2}\right)  v.
\]
The proof is now accomplished since
\[
e^{-P\left(  x\right)  }F^{\prime}\left(  x\right)  =e^{-P\left(  x\right)
}\frac{d}{dx}\left(  e^{P\left(  x\right)  }F\left(  x\right)  \right)
=F^{\prime}\left(  x\right)  +p\left(  x\right)  F\left(  x\right)
\]
for all $x\in\left(  a,b\right)  .$
\end{proof}

We specialize now to the case of the function $V$: then $a_{1}=p^{\prime}$ and
$a_{2}=\frac{1}{2}\left(  p^{\prime}p-p^{\prime\prime}\right)  $ and
$a_{3}=-B$. Then
\[
A_{1}=a_{1}^{\prime}-a_{1}p+2a_{2}=p^{\prime\prime}-p^{\prime}p+2\frac{1}%
{2}\left(  p^{\prime}p-p^{\prime\prime}\right)  =0.
\]
Further a straight forward calculation shows that
\begin{align*}
A_{2}  & =a_{2}^{\prime}-2a_{1}q+pa_{2}+a_{3}p^{\prime}=\frac{1}{2}p^{\prime
}p^{\prime}+\frac{1}{2}p^{\prime\prime}p-\frac{1}{2}p^{\prime\prime\prime
}-2p^{\prime}q+\frac{1}{2}\left(  p^{\prime}p^{2}-p^{\prime\prime}p\right) \\
& -\frac{1}{2}p^{\prime}p^{2}+p^{\prime}p^{\prime}+2p^{\prime}q-p^{\prime
\prime\prime}+\frac{3}{2}\frac{(p^{\prime\prime})^{2}}{p^{\prime}},
\end{align*}
is given by
\begin{equation}
A_{2}=\frac{3}{2}\left(  (p^{\prime})^{2}-p^{\prime\prime\prime}%
+\frac{(p^{\prime\prime})^{2}}{p^{\prime}}\right)  .\label{eqnewA2}%
\end{equation}
Finally (using that $q^{\prime}=0)$
\begin{align*}
A_{3}  & =a_{3}^{\prime}-a_{2}=\frac{d}{dx}\left(  -\frac{1}{2}p^{2}%
+p^{\prime}+2q-\frac{p^{\prime\prime\prime}}{p^{\prime}}+\frac{3}{2}%
\frac{(p^{\prime\prime})^{2}}{\left(  p^{\prime}\right)  ^{2}}\right)
-\frac{1}{2}\left(  p^{\prime}p-p^{\prime\prime}\right) \\
& =-p^{\prime}p+p^{\prime\prime}-\frac{p^{\left(  4\right)  }p^{\prime
}-p^{\prime\prime\prime}p^{\prime\prime}}{p^{\prime2}}+3\frac{p^{\prime\prime
}p^{\prime\prime\prime}\left(  p^{\prime}\right)  -\left(  p^{\prime\prime
}\right)  ^{2}p^{\prime\prime}}{\left(  p^{\prime}\right)  ^{3}}-\frac{1}%
{2}\left(  p^{\prime}p-p^{\prime\prime}\right)  ,
\end{align*}
is given by
\begin{equation}
A_{3}=\frac{3}{2}\left(  p^{\prime\prime}-p^{\prime}p\right)  -\frac
{p^{\left(  4\right)  }}{p^{\prime}}+\frac{4p^{\prime\prime}p^{\prime
\prime\prime}}{p^{\prime2}}-\frac{3\left(  p^{\prime\prime}\right)  ^{3}%
}{\left(  p^{\prime}\right)  ^{3}}.\label{eqnewA3}%
\end{equation}
It is remarkable that the coefficients $A_{2}$ and $A_{3}$ given by
(\ref{eqnewA2}) and (\ref{eqnewA3}) do not depend on the value (or sign) of
the constant $q$. In summary, we have the identity
\begin{equation}
V^{\prime}+pV=A_{2}f^{\prime}f+A_{3}v\left(  f\right)  .\label{eqnewAA}%
\end{equation}

The following is an improvement of Theorem \ref{ThmMain4}.

\begin{corollary}
Assume $f\in C^{2}\left[  a,b\right]  $ satisfies $f^{\prime\prime}%
+pf^{\prime}+qf=0$ and $f\left(  a\right)  =f^{\prime}\left(  a\right)  =0.$
Assume that $q^{\prime}=0$ and define $P=\int p\left(  x\right)  dx$. Then the
following identity
\[
w^{\prime}+\frac{3}{2}\left(  p-\frac{p^{\prime\prime}}{p^{\prime}}\right)
w=p^{\prime}\cdot f^{\prime}\cdot V
\]
holds where $V\left(  x\right)  $ is given for $x\in\left[  a,b\right]  $ by%
\begin{align*}
V\left(  x\right)   & =\frac{1}{2}f\left(  x\right)  ^{2}A_{2}\left(
x\right)  -e^{-P\left(  x\right)  }\int_{a}^{x}\frac{1}{2}f\left(  t\right)
^{2}\left(  A_{2}p+A_{2}^{\prime}\right)  e^{P}dt\\
& +e^{-P\left(  x\right)  }\int_{a}^{x}A_{3}\left(  t\right)  v\left(
f\right)  \left(  t\right)  e^{P\left(  t\right)  })dt
\end{align*}
and $A_{2}$ and $A_{3}$ are given by (\ref{eqnewA2}) and (\ref{eqnewA3}).

In particular, if $A_{2}\geq0$ and $A_{3}\geq0$ and $A_{2}p+A_{2}^{\prime}%
\leq0$ on $\left[  a,b\right]  $ then $V\geq0$ on $\left[  a,b\right]  .$
\end{corollary}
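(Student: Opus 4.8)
The plan is to observe first that the displayed differential identity for $w$ is nothing more than Theorem \ref{ThmMain4} restated, so no new work is required for that part; the genuine content of the corollary is the closed integral representation of $V$ and the resulting positivity criterion. The starting point is the first-order relation (\ref{eqnewAA}), namely $V^{\prime}+pV=A_{2}f^{\prime}f+A_{3}v(f)$, which was already derived just above the statement. Since the left-hand side equals $e^{-P}\frac{d}{dx}\!\left(e^{P}V\right)$, I would multiply through by $e^{P}$ to get $\frac{d}{dx}\!\left(e^{P}V\right)=e^{P}\!\left(A_{2}f^{\prime}f+A_{3}v\right)$, exactly mirroring the integrating-factor device used in the proof of (\ref{eqnewff}).

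Next I would check that $V(a)=0$. Because $V=p^{\prime}(f^{\prime})^{2}+\frac{1}{2}(p^{\prime}p-p^{\prime\prime})f^{\prime}f-Bv$ by (\ref{eqVagain}) and $f(a)=f^{\prime}(a)=0$, every summand vanishes at $a$: here one uses $v(a)=f^{\prime}(a)^{2}-f^{\prime\prime}(a)f(a)=0$ and the finiteness of the coefficients (in particular of $B$) at $a$. Integrating the relation from $a$ to $x$ then yields
\[
e^{P(x)}V(x)=\int_{a}^{x}A_{2}f^{\prime}f\,e^{P}\,dt+\int_{a}^{x}A_{3}\,v\,e^{P}\,dt.
\]

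The only nontrivial manipulation is the first integral, which I would handle precisely as in the derivation of (\ref{eqnewff}): write $f^{\prime}f=\frac{1}{2}(f^{2})^{\prime}$ and integrate by parts with $u=A_{2}e^{P}$ and $dv=(f^{2})^{\prime}dt$. The boundary term at $a$ drops out because $f(a)=0$, leaving $\frac{1}{2}A_{2}(x)e^{P(x)}f(x)^{2}-\frac{1}{2}\int_{a}^{x}(A_{2}e^{P})^{\prime}f^{2}\,dt$; since $(A_{2}e^{P})^{\prime}=(A_{2}^{\prime}+A_{2}p)e^{P}$, dividing the whole relation by $e^{P(x)}$ reproduces exactly the three displayed terms. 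For the positivity clause I would then read off signs term by term: $\frac{1}{2}A_{2}f^{2}\geq0$ since $A_{2}\geq0$; the middle term is $\geq0$ because the hypothesis $A_{2}p+A_{2}^{\prime}\leq0$ makes its integrand nonpositive while a minus sign precedes it; and the last term is $\geq0$ since $A_{3}\geq0$ together with $v(f)\geq0$.

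I expect the main subtlety to be precisely this last point: positivity of the third term requires $v(f)\geq0$ on $[a,b]$, which is \emph{not} among the three listed inequalities but is available from the standing results of the section (Corollary \ref{Cor2}, or Theorem \ref{ThmMain1}) in the applications to $f_{n}$; I would therefore make this dependence explicit rather than leave it implicit. A secondary technical caveat is that $p^{\prime}\neq0$ on $(a,b)$ is needed for $B$, $A_{2}$, $A_{3}$ and for the differentiation of $e^{P}V$ to be legitimate, which is satisfied in the target case $p=-2n/x$, where $p^{\prime}=2n/x^{2}>0$ on $(0,\infty)$.
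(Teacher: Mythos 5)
Your proposal is correct and follows essentially the same route as the paper's proof: multiply (\ref{eqnewAA}) by the integrating factor $e^{P}$, integrate from $a$ to $x$, and handle the $A_{2}f^{\prime}fe^{P}$ term by parts via $f^{\prime}f=\frac{1}{2}\left(f^{2}\right)^{\prime}$, exactly as in the derivation of (\ref{eqnewff}). Your two caveats are well taken and are in fact points where your write-up is more careful than the paper's: the paper integrates and writes $V\left(x\right)-V\left(0\right)$ without ever verifying that the boundary term vanishes (your check that $V\left(a\right)=0$ from $f\left(a\right)=f^{\prime}\left(a\right)=0$ and $v\left(a\right)=0$ supplies this), and the final positivity clause does tacitly require $v\left(f\right)\geq0$ on $\left[a,b\right]$, a hypothesis not among the three listed inequalities; in the intended application it is supplied by the earlier result that $v\left(f_{n}\right)>0$ on $\left(0,\infty\right)$, so making that dependence explicit, as you propose, genuinely tightens the statement.
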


\begin{proof}
From (\ref{eqnewAA}) we see that
\[
\frac{d}{dx}\left(  e^{P}V\right)  =e^{P}\left(  V^{\prime}+pV\right)
=A_{2}f^{\prime}fe^{P}+A_{3}v\left(  f\right)  e^{P}.
\]
Integration shows that
\[
V\left(  x\right)  -V\left(  0\right)  =e^{-P\left(  x\right)  }\int_{a}%
^{x}(A_{2}\left(  t\right)  f^{\prime}\left(  t\right)  f\left(  t\right)
e^{P\left(  t\right)  }+A_{3}\left(  t\right)  v\left(  f\right)  \left(
t\right)  e^{P\left(  t\right)  })dt.
\]
Partial integration shows that
\[
\int_{a}^{x}A_{2}\left(  t\right)  f^{\prime}\left(  t\right)  f\left(
t\right)  e^{P\left(  t\right)  }dt=\frac{1}{2}f^{2}\left(  x\right)
A_{2}\left(  x\right)  e^{P\left(  x\right)  }-\int_{a}^{x}\frac{1}{2}%
f^{2}\left(  A_{2}p+A_{2}^{\prime}\right)  e^{P}dt.
\]

\end{proof}

Now we specialize to the case $f=f_{n}$ and since $p\left(  x\right)  =\left(
-2n\right)  /x$ we have
\begin{align*}
A_{2}  & =\frac{3}{2}\left(  (p^{\prime})^{2}-p^{\prime\prime\prime}%
+\frac{(p^{\prime\prime})^{2}}{p^{\prime}}\right)  =\frac{6n\left(
n-1\right)  }{x^{4}}\\
A_{3}  & =\frac{3}{2}\left(  p^{\prime\prime}-p^{\prime}p\right)
-\frac{p^{\left(  4\right)  }}{p^{\prime}}+\frac{4p^{\prime\prime}%
p^{\prime\prime\prime}}{p^{\prime2}}-\frac{3\left(  p^{\prime\prime}\right)
^{3}}{\left(  p^{\prime}\right)  ^{3}}=\frac{6n\left(  n-1\right)  }{x^{3}}.
\end{align*}
Further
\[
\left(  A_{2}p+A_{2}^{\prime}\right)  =\frac{6n\left(  n-1\right)  }{x^{4}%
}\frac{-2n}{x}+\frac{6n\left(  n-1\right)  }{x^{5}}\left(  -4\right)
=-\frac{6n\left(  n-1\right)  }{x^{5}}\left(  2n+4\right)  .
\]
Further $P\left(  x\right)  =-2n\ln x$ and $e^{-P\left(  x\right)  }=x^{2n}$,
so we have
\begin{equation}
\frac{V\left(  f_{n}\right)  \left(  x\right)  }{6n\left(  n-1\right)  }%
=\frac{f_{n}^{2}\left(  x\right)  }{2x^{4}}+\left(  n+2\right)  x^{2n}\int
_{0}^{x}\frac{f_{n}\left(  t\right)  ^{2}}{t^{2n+5}}dt+x^{2n}\int_{0}^{x}%
\frac{v\left(  f_{n}\right)  \left(  t\right)  }{t^{2n+3}}%
dt.\label{eqVpositive}%
\end{equation}

\begin{corollary}
\label{Cor6a}Let $f_{n}\left(  x\right)  $ be defined as in (\ref{eqdeffn}).
Then $V\left(  f_{n}\right)  $ is positive on $\left(  0,\infty\right)  $, and
the functions $x^{-2n}V\left(  f_{n}\right)  $ and $V\left(  f_{n}\right)  $
are increasing and positive for all $x\in\left(  0,j_{n+\frac{1}{2},1}%
^{\prime}\right)  $.
\end{corollary}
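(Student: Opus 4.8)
The plan is to read off both assertions from the two identities already in hand, namely the integral representation (\ref{eqVpositive}) and the differential identity (\ref{eqnewAA}); essentially no new estimate is needed. First I would establish positivity on all of $\left(0,\infty\right)$ from (\ref{eqVpositive}). For $n\geq 2$ the prefactor $6n(n-1)$ is strictly positive and every term on the right-hand side is non-negative: the summand $f_n^2/(2x^4)$ is a square, and the two integrals have non-negative integrands because $f_n^2\geq 0$ and, by Corollary \ref{Cor2}, $v\left(f_n\right)(t)>0$ for $t>0$. Since each integrand is strictly positive away from isolated points, the integrals are strictly positive for every $x>0$, whence $V\left(f_n\right)(x)>0$ on $\left(0,\infty\right)$. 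One has to note that the integrals converge at the origin: $f_n$ vanishes there to order $2n+1$, so for $n\geq 2$ the integrands are $O\!\left(t^{2n-3}\right)$ and hence integrable. The values $n=0,1$ fall outside this formula, since $6n(n-1)$ vanishes and, for $n=0$, $V$ itself degenerates ($p^{\prime}\equiv 0$); for $n=1$ I would compute $V\left(f_1\right)=2x^2$ directly from the definition, which is manifestly positive.

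Second, for the monotonicity I would differentiate $x^{-2n}V\left(f_n\right)$. Abbreviating $V=V\left(f_n\right)$ and using $e^{P}=x^{-2n}$ together with $P^{\prime}=p$, identity (\ref{eqnewAA}) gives
\[
\frac{d}{dx}\left(x^{-2n}V\right)=x^{-2n}\left(V^{\prime}+pV\right)=x^{-2n}\left(A_2 f_n^{\prime}f_n+A_3\,v\left(f_n\right)\right),
\]
where $A_2=6n(n-1)/x^4\geq 0$ and $A_3=6n(n-1)/x^3\geq 0$. On $\left(0,j_{n+\frac{1}{2},1}^{\prime}\right)$ the function $f_n$ is increasing and positive (this was recorded around (\ref{eqnnn})), so $f_n>0$ and $f_n^{\prime}>0$ there, while $v\left(f_n\right)>0$ by Corollary \ref{Cor2}. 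Hence the bracket is non-negative, so $x^{-2n}V\left(f_n\right)$ is increasing, and together with the global positivity already proved it is positive on this interval. Finally $V\left(f_n\right)=x^{2n}\cdot\left(x^{-2n}V\left(f_n\right)\right)$ is a product of two positive increasing functions, hence positive and increasing on $\left(0,j_{n+\frac{1}{2},1}^{\prime}\right)$.

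The main difficulty is not any hard inequality but the correct choice of interval together with the attendant bookkeeping: one must confirm that $j_{n+\frac{1}{2},1}^{\prime}$ lies below the first positive zeros of both $f_n$ and $f_n^{\prime}$ so that $f_n,f_n^{\prime}>0$ there, which is exactly (\ref{eqnnn}) combined with the interlacing (\ref{eqjj}), and one must check the convergence at $0$ of the integrals in (\ref{eqVpositive}). The degenerate cases $n\leq 1$ also require separate elementary treatment. Once (\ref{eqVpositive}) and (\ref{eqnewAA}) are available, no genuinely difficult step remains.
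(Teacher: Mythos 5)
Your proposal is correct and takes essentially the same route as the paper's own proof: positivity on $\left(0,\infty\right)$ read off from the integral representation (\ref{eqVpositive}), monotonicity of $x^{-2n}V\left(f_{n}\right)$ obtained by rewriting (\ref{eqnewAA}) as $\frac{d}{dx}\left(x^{-2n}V\right)=x^{-2n}\left(A_{2}f_{n}^{\prime}f_{n}+A_{3}v\left(f_{n}\right)\right)\geq0$ on $\left(0,j_{n+\frac{1}{2},1}^{\prime}\right)$, and finally the product-of-two-increasing-positive-functions argument for $V\left(f_{n}\right)$ itself. Your extra care actually goes slightly beyond the paper: the checks of integrability at the origin and, especially, the degenerate cases $n=0$ (where $V$ is undefined since $p^{\prime}\equiv0$) and $n=1$ (where the prefactor $6n\left(n-1\right)$ vanishes, (\ref{eqVpositive}) cannot be invoked, and one verifies directly that $V\left(f_{1}\right)=2x^{2}>0$) are points the paper silently glosses over.
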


\begin{proof}
Clearly (\ref{eqVpositive}) implies that $V\left(  f_{n}\right)  \left(
x\right)  \geq0$ for all $x>0.$ Using that $q=1$ and $p=-2n/x,$ (\ref{eqnewAA}%
) shows that
\begin{equation}
V\left(  f_{n}\right)  ^{\prime}-\frac{2n}{x}V\left(  f_{n}\right)
=\frac{6n\left(  n-1\right)  }{x^{4}}f_{n}^{\prime}f_{n}+\frac{6n\left(
n-1\right)  }{x^{3}}v\left(  f_{n}\right)  .\label{eqVmain}%
\end{equation}
Since for any differentiable function $g$
\begin{equation}
x^{-\beta}\frac{d}{dx}\left(  x^{\beta}g\left(  x\right)  \right)  =g^{\prime
}\left(  x\right)  +\frac{\beta}{x}g\left(  x\right) \label{eqbeta}%
\end{equation}
we see that $F\left(  x\right)  :=x^{-2n}V\left(  f_{n}\right)  $ is
increasing on $\left(  0,j_{n+\frac{1}{2},1}^{\prime}\right)  $. Since
$F\left(  0\right)  \geq0$ we conclude that $F$ is positive on $\left(
0,j_{n+\frac{1}{2},1}^{\prime}\right)  .$ Since $V\left(  f_{n}\right)  $ is
the product of two increasing and positive functions on $\left(
0,j_{n+\frac{1}{2},1}^{\prime}\right)  $ it follows that $V\left(
f_{n}\right)  $ is increasing and positive on $\left(  0,j_{n+\frac{1}{2}%
,1}^{\prime}\right)  .$
\end{proof}

Finally we can prove the main result of the paper:

\begin{corollary}
Let $f_{n}\left(  x\right)  $ be defined as in (\ref{eqdeffn}). Then
$x^{-3\left(  n-1\right)  }w\left(  f_{n}\right)  $ and $w\left(
f_{n}\right)  $ are increasing and positive for all $x\in\left(
0,j_{1}\left(  f_{n}^{\prime}\right)  \right)  .$ Moreover $w\left(
f_{n}\right)  $ is positive on $\left(  0,j_{1}\left(  f_{n}\right)  \right)
.$
\end{corollary}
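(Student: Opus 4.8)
The plan is to convert Theorem \ref{ThmMain4} into an integrating-factor identity for $w(f_n)$ and then read off monotonicity and positivity from the signs of the individual factors. First I would specialize the identity \eqref{eqwagain} to $p=-2n/x$ and $q=1$. A short computation gives $p'=2n/x^{2}$ and $p''/p'=-2/x$, hence $\frac{3}{2}(p-p''/p')=-3(n-1)/x$, so Theorem \ref{ThmMain4} becomes
\[
\frac{d}{dx}w(f_n)-\frac{3(n-1)}{x}\,w(f_n)=\frac{2n}{x^{2}}\,f_n'\,V(f_n).
\]
Applying \eqref{eqbeta} with $\beta=-3(n-1)$ rewrites the left-hand side as $x^{3(n-1)}\frac{d}{dx}\big(x^{-3(n-1)}w(f_n)\big)$, so that
\[
\frac{d}{dx}\Big(x^{-3(n-1)}w(f_n)\Big)=\frac{2n}{x^{3n-1}}\,f_n'(x)\,V(f_n)(x).
\]

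Next I would analyse the sign of the right-hand side on $(0,j_1(f_n'))$. Since $f_n'=xf_{n-1}$ and $j_1(f_n')=j_{n-1/2,1}$, the factor $f_n'$ is strictly positive on $(0,j_1(f_n'))$; by Corollary \ref{Cor6a} the factor $V(f_n)$ is strictly positive on all of $(0,\infty)$; and $2n/x^{3n-1}>0$. Hence $x^{-3(n-1)}w(f_n)$ is strictly increasing on $(0,j_1(f_n'))$. To upgrade ``increasing'' to ``positive'' I would check the left endpoint: from the explicit expression \eqref{eqs4} together with the fact that $f_n$ and $f_n'$ vanish at $0$ to orders $2n+1$ and $2n$, the function $w(f_n)$ vanishes at $0$ to order at least $6n-3$, so $x^{-3(n-1)}w(f_n)$ vanishes to order at least $3n>0$ and therefore tends to $0$ as $x\to0^+$. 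A function with limit $0$ at the left endpoint and strictly positive derivative is strictly positive on the open interval, so $x^{-3(n-1)}w(f_n)$ is increasing and positive on $(0,j_1(f_n'))$. Writing $w(f_n)=x^{3(n-1)}\cdot\big(x^{-3(n-1)}w(f_n)\big)$ as a product of a positive non-decreasing function (the factor $x^{3(n-1)}$, constant when $n=1$) and a positive strictly increasing function then gives that $w(f_n)$ itself is increasing and positive on $(0,j_1(f_n'))$; this is precisely the product argument already used in the proof of Corollary \ref{Cor6a}.

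Finally I would extend positivity across $j_1(f_n')$ up to $j_1(f_n)$ by patching. At $x_0=j_1(f_n')$ we have $f_n'(x_0)=0$ and $f_n(x_0)>0$, so \eqref{eqs4} collapses to $w(f_n)(x_0)=\frac{4n}{x_0^{2}}f_n^3(x_0)>0$. On the remaining interval $(j_1(f_n'),j_1(f_n))$ the interlacing of zeros (which forces $f_n>0$ and $f_n'<0$ there) makes $w(f_n)>0$ by Corollary \ref{Cor4}. Combining the three pieces --- positivity on $(0,j_1(f_n'))$, at $j_1(f_n')$, and on $(j_1(f_n'),j_1(f_n))$ --- yields $w(f_n)>0$ on the whole of $(0,j_1(f_n))$.

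The computations in the first paragraph are routine. The genuinely delicate points are the boundary analysis at $0$, where one must verify that the order of vanishing of $w(f_n)$ strictly exceeds $3(n-1)$ so that $x^{-3(n-1)}w(f_n)$ really limits to $0$ rather than to a positive or indeterminate value, and the bookkeeping needed to glue the monotonicity interval $(0,j_1(f_n'))$ to the larger positivity interval $(0,j_1(f_n))$ via the endpoint value and Corollary \ref{Cor4}. I expect no single step to be hard; the work is the careful assembly of the already-established Theorem \ref{ThmMain4}, Corollary \ref{Cor6a}, Corollary \ref{Cor5} and Corollary \ref{Cor4}.
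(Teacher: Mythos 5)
Your proposal is correct and follows essentially the same route as the paper's own proof: specialize Theorem \ref{ThmMain4} to $p=-2n/x$, $q=1$, invoke Corollary \ref{Cor6a} for the positivity of $V(f_{n})$ and the sign of $f_{n}'$ on $\left(0,j_{1}\left(f_{n}'\right)\right)$, apply the integrating-factor identity (\ref{eqbeta}) to get monotonicity of $x^{-3(n-1)}w(f_{n})$, and finish with Corollary \ref{Cor4} on $\left(j_{1}\left(f_{n}'\right),j_{1}\left(f_{n}\right)\right)$. You are in fact slightly more careful than the paper at two points it glosses over --- verifying via (\ref{eqs4}) that $w(f_{n})$ vanishes at $0$ to order at least $6n-3>3(n-1)$, so that $x^{-3(n-1)}w(f_{n})\to 0$ there, and checking positivity at the junction point $x_{0}=j_{1}\left(f_{n}'\right)$ itself, where $w(f_{n})(x_{0})=\frac{4n}{x_{0}^{2}}f_{n}^{3}(x_{0})>0$.
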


\begin{proof}
We apply Theorem \ref{ThmMain4} with $p=\left(  -2n\right)  /x$ and $q=1.$
Then $w=w\left(  f_{n}\right)  $ satisfies the equation
\[
w^{\prime}+\frac{3}{2}\left(  p-\frac{p^{\prime\prime}}{p^{\prime}}\right)
w=w^{\prime}-\frac{3\left(  n-1\right)  }{x}w=p^{\prime}\cdot f_{n}^{\prime
}\cdot V\left(  f_{n}\right)  .
\]
Since $p^{\prime}\left(  x\right)  =2n/x^{2}\geq0$ and $V\left(  f_{n}\right)
\left(  x\right)  \geq0$ for all $x>0,$ and $f_{n}^{\prime}\left(  x\right)
>0 $ for all $\left(  0,j_{1}\left(  f_{n}^{\prime}\right)  \right)  $ we
conclude the left hand side is positive. Using (\ref{eqbeta}) we conclude that
$x^{-3\left(  n-1\right)  }w$ is increasing on $\left(  0,j_{1}\left(
f_{n}^{\prime}\right)  \right)  $, and it is positive since $w\left(
0\right)  =0.$ For the positivity of $w\left(  f_{n}\right)  $ on the interval
$\left(  j_{1}\left(  f_{n}^{\prime}\right)  ,j_{1}\left(  f_{n}\right)
\right)  $ see Corollary \ref{Cor4}.
\end{proof}

$\ $

\textbf{Acknowlegdement} The work of all authors was funded under project
KP-06-N52-1 with Bulgarian NSF.

\smallskip\noindent\textbf{Data availability} Data sharing is not applicable
to this article since no data sets were generated or analyzed.

\smallskip\noindent\textbf{Declarations}

\smallskip\noindent\textbf{Conflict of interest} The authors declare that they
have no competing interests.

\end{document}